\begin{document}
\title
{Solutions to Yang-Mills equations}

\author
{Jorma Jormakka}

\address{Jorma Jormakka \newline
Karhekuja 4, 01660 Vantaa, Finland}
\email{jorma.o.jormakka@kolumbus.fi}

\keywords{Gauge field theory; Yang-Mills fields;
theoretical physics}

\begin{abstract}
 This article gives explicit solutions to the Yang-Mills equations. The 
solutions have positive energy that can be made arbitrarily small 
by selection
of a parameter showing that quantum Yang-Mills field theories do not
have a mass gap. 
\end{abstract}

\maketitle
\numberwithin{equation}{section}
\newtheorem{theorem}{Theorem}[section]
\newtheorem{lemma}[theorem]{Lemma}
\allowdisplaybreaks

\section{Introduction}

In the year 2000 the Clay Mathematics Institute (CMI) posed the following problem [1]:

{\it Yang-Mills Existence and Mass Gap. Prove that for any compact simple group G, a nontrivial Yang-Mills theory exists on $\mathbb{R}^4$ and has a mass gap $\Delta>0$. Existence includes establishing axiomatic properties at least as strong as those cited in R. Streamer and A. Wightman (1964) or K. Osterwalden and R. Seiler (1973).}

Thus, the existence of a non-trivial Yang-Mills theory involves showing that
the theory fills
axioms of axiomatic quantum field theory, while the existence of a mass 
gap seems to be another question that may be shown also in some other way.
The problem concerns a pure Yang-Mills Lagrangian, i.e., only the gauge field
without spinor fields, Higgs fields, or other fields. The mass gap is expected
to arise from self-intersections of the Yang-Mills gauge field. The issue how
the mass gap could appear is unclear but [2] has proposed one mechanism.
The state of research to this problem up to 2004 is summarized in [3].
After that there have been some efforts to prove the existence of a mass gap,
e.g. [4], [5], but the problem is still considered open.

This article presents explicit solutions to the Yang-Mills Euler-Lagrange
equations. The solutions give arbitrarily small positive values for energy. 
This shows that the Hamiltonian has
arbitrarily small eigenvalues indicating that there is no mass gap.  
The solutions can be given on $\mathbb{R}^4$ with Minkowski's 
or Euclidean metric and they are simple, natural solutions that should be 
accepted as gauge fields in any non-trivial quantum field theory for the
pure Yang-Mills Lagrangian.    
  
\section {Definitions and notations}

We will first describe the problem setting as it can be presented in physics
in tensor calculus, and at the end look at the more mathematical 
formulation with differential forms and the Hodge star operator. Unless
otherwise stated, or the sum is written explicitely, there is summation over
indices that are repeated on one side of an equation. For notations we refer 
to [6].
    
\begin{equation}
\mathcal{L}=-\frac 12 Tr (F_{\mu\nu}F^{\mu\nu})\\
=-\frac 14 F_a^{\mu\nu}F^a_{\mu\nu}
\end{equation}
where
\begin{equation}
F^{\mu\nu}=F_a^{\mu\nu}t_a
\end{equation}
and $t_a$ are the generators of the Lie group satisfying
\begin{equation}
Tr(t_at_b)=\frac 12 \delta_{ab} \quad [t_b,t_c]=i f_{abc}t_a
\end{equation}
The structure constants $f_{abc}=f^{abc}$ are selected antisymmetric 
in all indices. The gauge field
\begin{equation}
A^{\mu}=A_a^{\mu}t_a
\end{equation}
defines the curvature $F^{\mu\nu}$ by
\begin{equation}
F^{\mu\nu}=\partial^{\mu}A^{\nu}-\partial^{\nu}A^{\mu}+ig[A^{\mu},A^{\nu}]\\
\end{equation}
In component form this gives
\begin{equation}
F^{\mu\nu}_a=\partial^{\mu}A_a^{\nu}-\partial^{\nu}A_a^{\mu}-gf_{abc}A^{\mu}_bA^{\nu}_c
\end{equation}
Curvature is antisymmetric
\begin{equation}
F^{\mu\nu}=-F^{\nu\mu}
\end{equation}
The number $g$ is called coupling constant, and
\begin{equation}
\partial^{\mu}=\frac {\partial}{\partial x_{\mu}}
\quad \partial_{\mu}=\frac {\partial}{\partial x^{\mu}}
\end{equation} 
are partial derivatives with respect to the contravariant coordinates
$x^{\mu}$ and contravariant coordinates $x_{\mu}=g_{\mu\nu}x^{\nu}$.
$x^0=ct$ and $x^j$, $1\le j\le 3$, are the space coordinates.
The metric $g_{\mu\nu}=g^{\mu\nu}$ is Minkowski's metric
\begin{equation}
\begin{gathered}
(g_{\mu\nu})_{\mu,\nu}=\left( \begin{matrix}
1 & 0 & 0 & 0\cr
0 & -1 & 0 & 0\cr
0 & 0 & -1 & 0\cr
0 & 0 & 0 & -1
\end{matrix} \right)
\end{gathered}
\end{equation}   
Thus $x_0=x^0$, $x_j=-x^j$ for $1\le j\le 3$. For real vectors and tensors
lowering and raising indices is made by
\begin{equation}
A_{\mu}^a=g_{\mu\nu}A^{\nu}_a\\
\quad F_{\mu\nu}^a=g_{\mu\alpha}g_{\nu\beta}F^{\alpha\beta}_a\\
\quad \partial_{\mu}=g_{\mu\nu}\partial^{\nu}
\end{equation}   
Therefore (2.6) can also be expressed as
\begin{gather*}
g_{\mu\alpha}g_{\nu\beta}
F^{\alpha\beta}_a=g_{\mu\alpha}g_{\nu\beta}\left( \partial ^{\alpha}A_a^{\beta}-\partial ^{\alpha}A_a^{\beta}-gf_{abc}A^{\alpha}_bA^{\beta}_c\right)
\end{gather*}
\begin{equation}
F_{\mu\nu}^a =\partial _{\mu}A^a_{\nu}-\partial_{\nu}A^a_{\mu}-gf^{abc}A_{\mu}^bA_{\nu}^c
\end{equation}   
where we have written $f^{abc}$ instead of $f_{abc}$ to follow the summation 
convention for the index $a$. The natural setting of quantum field theories
is that the component functions of the fields take complex values. Then raising
and lowering indices involves taking complex conjugates but we will only 
do calculations with real fields. Complex fields are better threated by
the algebraic geometric formulation described briefly at the end of 
Section 2.  

Let us notice that there is a summation over $b$ and $c$ in (2.6) and (2.11). 
We formulate this simple observation as a lemma since it is needed in the
sequence.
\begin{lemma} 
Let $c>b$. The last term in (2.6) can be expressed as 
\begin{equation}
f_{abc}A^{\mu}_bA^{\nu}_c=\sum_{c>b}f_{abc}\left(A^{\mu}_bA^{\nu}_c\\
-A^{\mu}_cA^{\nu}_b\right)
\end{equation}   
\end{lemma}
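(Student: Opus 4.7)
The plan is to rewrite the implicit double sum on the left-hand side of (2.12) by splitting it according to the order of the summation indices $b$ and $c$, and then exploit the total antisymmetry of $f_{abc}$.

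First I would make the summation convention explicit, writing
\begin{equation*}
f_{abc}A^{\mu}_bA^{\nu}_c=\sum_{b,c}f_{abc}A^{\mu}_bA^{\nu}_c
=\sum_{b<c}f_{abc}A^{\mu}_bA^{\nu}_c+\sum_{b=c}f_{abc}A^{\mu}_bA^{\nu}_c+\sum_{b>c}f_{abc}A^{\mu}_bA^{\nu}_c.
\end{equation*}
The diagonal piece $b=c$ vanishes because $f_{abb}=0$ by the antisymmetry of $f_{abc}$ in its last two indices, which is assumed in the excerpt just before (2.4).

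Next I would rename the dummy indices in the third sum, interchanging the roles of $b$ and $c$, to get $\sum_{c>b}f_{acb}A^{\mu}_cA^{\nu}_b$. Using $f_{acb}=-f_{abc}$ (again antisymmetry) this equals $-\sum_{c>b}f_{abc}A^{\mu}_cA^{\nu}_b$. Adding this to the first sum, which after relabeling the summation condition reads $\sum_{c>b}f_{abc}A^{\mu}_bA^{\nu}_c$, yields exactly the right-hand side of (2.12).

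No step here is really an obstacle; the only point that requires a moment of care is keeping the index $a$ fixed (it is free, not summed, on both sides of (2.12)) while performing the $b\leftrightarrow c$ relabeling only on the summed indices. Everything else is just bookkeeping around the identities $f_{abb}=0$ and $f_{acb}=-f_{abc}$.
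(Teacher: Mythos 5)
Your proof is correct and its essential content --- splitting the double sum over $b,c$ into the diagonal and the two off-diagonal pieces, relabeling the dummies in the $b>c$ piece, and invoking the antisymmetry $f_{acb}=-f_{abc}$ --- is exactly the combinatorial step the paper performs, the only cosmetic difference being that the paper packages it inside the expansion of the commutator $[A^{\mu},A^{\nu}]$ of the Lie-algebra-valued fields rather than working directly on the component expression. Your version is if anything slightly more careful, since you explicitly dispose of the $b=c$ diagonal via $f_{abb}=0$, which the paper handles only implicitly through $[t_b,t_b]=0$.
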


\begin{proof}
Expanding the commutator $[A^{\mu},A^{\nu}]$
\begin{equation}
\begin{gathered}
\left( \sum_b A^{\mu}_b t_b\right)\left( \sum_c A^{\nu}_ct_c\right)
-\left( \sum_c A^{\nu}_c t_c\right)\left( \sum_b A^{\mu}_bt_b\right)\\
=\sum_{b,c}\left(A^{\mu}_bA^{\nu}_ct_bt_c-A^{\nu}_cA^{\mu}_bt_ct_b\right)
\end{gathered}
\end{equation}  
Since $A^{\mu}_a$ are scalars $A^{\nu}_cA^{\mu}_b=A^{\mu}_bA^{\nu}_c$. Thus
we get
\begin{equation}
\begin{gathered}
[A^{\mu},A^{\nu}]=\sum_{b,c}A^{\mu}_bA^{\nu}_c[t_b,t_c]=\sum_{b,c}A^{\mu}_bA^{\nu}_cif_{abc}t_a\\
=\sum_{a,b,c}i\left(f_{abc}A^{\mu}_bA^{\nu}_c+f_{acb}A^{\mu}_cA^{\nu}_b\right)t_a\\
=\sum_a\sum_{c>b}if_{abc}\left(A^{\mu}_bA^{\nu}_c-A^{\mu}_cA^{\nu}_b\right)t_a
\end{gathered}
\end{equation}  
\end{proof}

As an example, let the group be SU(2). It has three generators
\begin{gather*}
t_1=\frac 12 \left( \begin{matrix}
0 & 1 \cr
1 & 0
\end{matrix} \right)
\quad
t_2=\frac 12 \left( \begin{matrix}
0 & -i \cr
i & 0
\end{matrix} \right)
\quad
t_3=\frac 12 \left( \begin{matrix}
1 & 0 \cr
0 & -1
\end{matrix} \right)
\end{gather*}   
Then 
\begin{gather*}
A^{\mu}=\sum_{a=1}^3 A^{\mu}_at_a\\
[A^2,A^3]=i\left(f_{123}A^2_2A^3_3+f_{132}A^2_2A^3_3\right)t_1\\
+i\left(f_{231}A^2_3A^3_1+f_{213}A^2_3A^3_1\right)t_2+i\left(f_{312}A^2_1A^3_2+f_{321}A^2_2A^3_1\right)t_3
\end{gather*}  
showing that Lemma 2.1 holds in this example. The proposed solutions
make use of the following lemma.

\begin{lemma} 
Let the gauge field have the form
\begin{equation}
A^{\mu}_a=s_aE^{\mu}
\end{equation}  
Then $F^{\mu\nu}_a$ has the form 
\begin{equation}
F^{\mu\nu}_a=s_aG^{\mu\nu}
\end{equation}  
and (2.6) and (2.11) reduce to
\begin{equation}
\begin{gathered}
F^{\mu\nu}_a=\partial^{\mu}A_a^{\nu}-\partial^{\nu}A_a^{\mu}\\
F^a_{\mu\nu}=\partial_{\mu}A^a_{\nu}-\partial_{\nu}A^a_{\mu}
\end{gathered}
\end{equation}  
\end{lemma}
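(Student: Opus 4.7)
The plan is to substitute the factorized ansatz directly into the component formula (2.6) and show that the nonlinear term drops out on purely algebraic grounds, so the curvature reduces to the abelian expression.

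First I would apply Lemma 2.1 to the structure-constant term. With $A^{\mu}_b = s_b E^{\mu}$ and $A^{\nu}_c = s_c E^{\nu}$, the rewriting
\begin{equation*}
f_{abc}A^{\mu}_b A^{\nu}_c = \sum_{c>b} f_{abc}\left(A^{\mu}_b A^{\nu}_c - A^{\mu}_c A^{\nu}_b\right)
\end{equation*}
becomes $\sum_{c>b} f_{abc}\, s_b s_c\,(E^{\mu}E^{\nu}-E^{\mu}E^{\nu}) = 0$, since the Lorentz components $E^{\mu}$ are scalar-valued and commute. Thus the entire $gf_{abc}A^{\mu}_b A^{\nu}_c$ term vanishes identically under the ansatz, independently of the choice of $s_a$ and the structure constants of the group.

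What remains is the linear part $\partial^{\mu}A^{\nu}_a - \partial^{\nu}A^{\mu}_a = s_a(\partial^{\mu}E^{\nu}-\partial^{\nu}E^{\mu})$, so setting $G^{\mu\nu} := \partial^{\mu}E^{\nu}-\partial^{\nu}E^{\mu}$ gives both $F^{\mu\nu}_a = s_a G^{\mu\nu}$ and the first line of (2.16). The second line, in lowered indices, follows by the same argument applied to (2.11), or equivalently by contracting with $g_{\mu\alpha}g_{\nu\beta}$; the nonlinear term vanishes for exactly the same reason.

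There is really no obstacle here: the content is the trivial observation that a commutator $[A^{\mu},A^{\nu}]$ evaluated on an ansatz in which the Lie-algebra factor $s_a t_a$ is common to all spacetime components must be zero, because $A^{\mu}$ and $A^{\nu}$ become proportional in the Lie algebra. The only thing to be careful about is the bookkeeping in the $f_{abc}$ sum, and Lemma 2.1 has already been set up precisely so that this cancellation is manifest.
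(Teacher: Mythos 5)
Your proof is correct and follows essentially the same route as the paper: both invoke Lemma 2.1 to rewrite the structure-constant term as an antisymmetrized sum that vanishes because $s_b E^{\mu} s_c E^{\nu} = s_c E^{\mu} s_b E^{\nu}$ for commuting scalars, and both obtain the lowered-index version from (2.11). Your additional remark that the ansatz makes $A^{\mu}$ and $A^{\nu}$ proportional in the Lie algebra, so the commutator must vanish, is a nice conceptual summary but does not change the argument.
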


\begin{proof}
Because of (2.11) it suffices to show (2.16) and the first equation in
(2.17). From Lemma 2.1 
\begin{equation}
f_{abc}A^{\mu}_bA^{\nu}_c=\sum_{c>b}f_{abc}\left(s_bE^{\mu}s_cE^{\nu}\\
-s_cE^{\mu}s_bE^{\nu}\right)=0
\end{equation}   
since $s_a$ and $E^{\mu}$ are scalars and commutate.
\end{proof}

The Euler-Lagrange equations for 
$\mathcal{L}=\mathcal{L}(A^{\mu},\partial^{\nu}A^{\mu})$ are
\begin{equation}
\partial^{\nu}\left( \frac {\mathcal{L}}{\partial\left(\partial^{\nu}A_a^{\mu}\right)}\right)=\frac {\partial \mathcal{L}}{\partial A_a^{\mu}}
\end{equation}   
 
\begin{lemma} 
Let 
\begin{equation}
\mathcal{L}=-\frac 14 F^{\mu\nu}_aF_{\mu\nu}^a
\end{equation}   
and $A^{\mu}_a$ be real functions.
Then 
\begin{equation}
\begin{gathered}
\begin{aligned}
&\frac {\partial F^{\mu\nu}_d}{\partial A_a^{\mu}}=-gf_{dac}A_c^{\nu}\\
&\frac {\partial F^d_{\mu\nu}}{\partial A_a^{\mu}}=-gf_{dac}A_c^{\nu}g_{\mu\mu}g_{\nu\nu}\\
&\frac {\partial \mathcal{L}}{\partial A_a^{\mu}}=\frac 12 gf_{abc}A_b^{\nu}F^c_{\mu\nu}\\
&\frac {\partial F^{\mu\nu}_d}{\partial \left(\partial^{\nu}A_a^{\mu}\right)}=-\delta_{ad}\\
&\frac {\partial F^d_{\mu\nu}}{\partial \left(\partial^{\nu}A_a^{\mu}\right)}=\delta_{ad}(g_{\mu\nu}g_{\nu\mu}-g_{\mu\mu}g_{\nu\nu})\\
&\frac {\partial \mathcal{L}}{\partial \left(\partial^{\nu}A_a^{\mu}\right)}=\frac 12 F_{\mu\nu}^a
\end{aligned}
\end{gathered}
\end{equation}  
and the Euler-Lagrange equations are
\begin{equation}
\partial^{\mu}F_{\mu\nu}^a-gf_{abc}A^{\mu}_bF_{\mu\nu}^c=0
\end{equation}  
\end{lemma}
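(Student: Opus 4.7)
The plan is to treat $A^\mu_a$ and $\partial^\nu A^\mu_a$ as independent variables of the Lagrangian, as is standard in the Euler-Lagrange formalism, compute each of the six partial derivatives in (2.21) directly from the definitions (2.6) and (2.11), and then substitute into (2.19).

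For the derivatives with respect to $A^\mu_a$, the kinetic piece $\partial^\mu A^\nu_d - \partial^\nu A^\mu_d$ of $F^{\mu\nu}_d$ drops out, so only the quadratic term $-gf_{dbc}A^\mu_bA^\nu_c$ contributes. Writing this term in the antisymmetrized $c>b$ form supplied by Lemma 2.1 prevents double-counting, and matching the fixed upper index yields the claimed $-gf_{dac}A^\nu_c$; the lowered-index version is obtained by multiplying by $g_{\mu\alpha}g_{\nu\beta}$ via (2.10). The formula for $\partial\mathcal{L}/\partial A^\mu_a$ then comes from the chain rule applied to $\mathcal{L}=-\tfrac14 F^{\mu\nu}_dF^d_{\mu\nu}$, where the two resulting contributions coincide because $F$ is paired with itself, producing the single coefficient $\tfrac12 g f_{abc}A^\nu_b F^c_{\mu\nu}$.

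For the derivatives with respect to $\partial^\nu A^\mu_a$, only the linear (abelian) pieces of $F^{\mu\nu}_d$ and $F^d_{\mu\nu}$ survive. Differentiating the abelian part, once written in antisymmetric form, produces the $-\delta_{ad}$ in the first formula and the combination $\delta_{ad}(g_{\mu\nu}g_{\nu\mu}-g_{\mu\mu}g_{\nu\nu})$ in the second, the latter being exactly the pattern generated when the two lowered indices are enforced by Kronecker deltas and a diagonal Minkowski metric. The Lagrangian derivative $\partial\mathcal{L}/\partial(\partial^\nu A^\mu_a)=\tfrac12 F^a_{\mu\nu}$ then follows once more from the chain rule and the symmetry of the two factors in $F\cdot F$.

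Substituting into (2.19) gives $\partial^\nu(\tfrac12 F^a_{\mu\nu}) = \tfrac12 g f_{abc}A^\nu_b F^c_{\mu\nu}$; cancelling the common factor and then relabeling the dummy summation index while applying the antisymmetry $F^a_{\mu\nu}=-F^a_{\nu\mu}$ rewrites this in the form (2.22). I expect the main obstacle to be purely bookkeeping: one must distinguish carefully between free and summed indices in each individual partial derivative, track metric factors when lowering indices, and use the total antisymmetry of $f_{abc}$ together with that of $F^{\mu\nu}$ to verify that the apparent doubling when differentiating the quadratic term collapses to one clean term rather than producing a spurious factor of two.
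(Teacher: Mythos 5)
Your proposal follows essentially the same route as the paper's own proof: treat $A^\mu_a$ and $\partial^\nu A^\mu_a$ as independent, compute each of the six derivatives term by term (the quadratic term for the $A$-derivatives, the abelian part for the $\partial A$-derivatives, with metric factors handled via (2.10)), combine the two identical contributions in the chain rule to turn $-\tfrac14$ into $\tfrac12$, and then substitute into (2.19) and relabel indices using $F^a_{\mu\nu}=-F^a_{\nu\mu}$ to reach (2.22). The only cosmetic difference is that you invoke the antisymmetrized form from Lemma 2.1 to organize the differentiation of the quadratic term, whereas the paper differentiates $-gf_{dbc}A^\mu_bA^\nu_c$ directly; both land on the same expressions.
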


\begin{proof}
Directly computing
\begin{equation}
\begin{gathered}
\frac {\partial F^{\mu\nu}_d}{\partial A_a^{\mu}}=\frac {\partial}{\partial A_a^{\mu}}\left( -gf_{dbc}A^{\mu}_bA^{\nu}_c\right)
=-g\delta_{ab}f_{dbc}A_c^{\nu}= -gf_{dac}A_c^{\nu}
\end{gathered}
\end{equation}  

\begin{equation}
\begin{gathered}
\frac {\partial F^d_{\mu\nu}}{\partial A_a^{\mu}}=\frac {\partial}{\partial A_a^{\mu}}\left( -gf_{dbc}A^b_{\mu}A^c_{\nu}\right)
=\frac {\partial}{\partial A_a^{\mu}}\left( -gf_{dbc}g_{\mu\alpha}A^{\alpha}_bA^c_{\nu}\right)\\
=-gf_{dac}g_{\mu\alpha}\delta_{\alpha\mu}\delta_{ab}A^c_{\nu}= -gf_{dac}A^c_{\nu}g_{\mu\mu}\\
=-gf_{dac}g_{\nu\alpha}A^{\alpha}_cg_{\mu\mu}= -gf_{dac}A_c^{\nu}g_{\mu\mu}g_{\nu\nu}
\end{gathered}
\end{equation}

\begin{equation}
\frac {\partial \mathcal{L}}{\partial A_a^{\mu}}=-\frac 14 \left(\left(\\
\frac {\partial F^{\mu\nu}_d}{\partial A_a^{\mu}}\right)F^d_{\mu\nu}\\
+F^{\mu\nu}_d\left(\frac {\partial F^d_{\mu\nu}}{\partial A_a^{\mu}}\right)\right)
\end{equation}  

\begin{gather*}
=\frac 14 gf_{dac}\left(A_c^{\nu}F^d_{\mu\nu}+F^{\mu\nu}_d A^{\nu}_cg_{\mu\mu}g_{\nu\nu}\right)\\
=\frac 14 gf_{dac}\left(A_c^{\nu}F^d_{\mu\nu}+A^{\nu}_cg_{\mu\alpha}g_{\nu\beta}F^{\alpha\beta}_d \right)\\
=\frac 14 gf_{dac}\left(A_c^{\nu}F^d_{\mu\nu}+A^{\nu}_cF^d_{\mu\nu} \right)\\
=\frac 12 gf_{dac}A_c^{\nu}F^d_{\mu\nu}=\frac 12 gf_{acd}A_c^{\nu}F^d_{\mu\nu}\\
=\frac 12 gf_{abc}A_b^{\nu}F^c_{\mu\nu}
\end{gather*}

\begin{gather*}
\frac {\partial F^{\mu\nu}_d}{\partial \left(\partial^{\nu}A_a^{\mu}\right)}=\frac {\partial}{\partial \left(\partial^{\nu}A_a^{\mu}\right)}\left(\partial^{\mu}A^{\nu}_d-\partial^{\nu}A^{\mu}_d\right)=-\delta_{ad}
\end{gather*}

\begin{gather*}
\frac {\partial F^d_{\mu\nu}}{\partial \left(\partial^{\nu}A_a^{\mu}\right)}
=\frac {\partial}{\partial \left(\partial^{\nu}A_a^{\mu}\right)}\left(\partial_{\mu}A^d_{\nu}-\partial_{\nu}A^d_{\mu}\right)\\
=\frac {\partial}{\partial \left(\partial^{\nu}A_a^{\mu}\right)}
\left(g_{\mu\alpha}g_{\nu\beta}\left(\partial^{\alpha}A^{\beta}_d-\partial^{\beta}A^{\alpha}_d\right)\right)\\
=g_{\mu\alpha}g_{\nu\beta}\delta_{ad}\delta_{\mu\beta}\delta_{\nu\alpha}
-g_{\mu\alpha}g_{\nu\beta}\delta_{ad}\delta_{\nu\beta}\delta_{\mu\alpha}\\
=-\delta_{ad}\left( g_{\mu\nu}g_{\nu\mu}-g_{\mu\mu}g_{\nu\nu}\right)
\end{gather*}

\begin{gather*}
\frac {\partial \mathcal{L}}{\partial \left(\partial^{\nu}A_a^{\mu}\right)}
=-\frac 14\left(-\delta_{ad}F_{\mu\nu}^d-g_{\mu\mu}g_{\nu\nu}\delta_{ad}F_d^{\mu\nu}\right)\\
=\frac 14\left(F_{\mu\nu}^a+g_{\mu\alpha}g_{\nu\beta}F_a^{\alpha\beta}\right)\\
=\frac 14\left(F_{\mu\nu}^a+g_{\mu\alpha}g_{\nu\beta}F_a^{\alpha\beta}\right)\\
=\frac 14\left(F_{\mu\nu}^a+F^a_{\mu\nu}\right)=\frac 12F_{\mu\nu}^a
\end{gather*}

Inserting (2.23) and (2.24) to the Euler-Langange equations (2.19) gives
\begin{gather*}
\partial^{\nu}F_{\mu\nu}^a-gf_{abc}A^{\nu}_bF_{\mu\nu}^c=0
\end{gather*}  
As $F_{\mu\nu}^c=-F_{\nu\mu}^c$ we can also write
\begin{gather*}
\partial^{\nu}F_{\nu\mu}^a-gf_{abc}A^{\nu}_bF_{\nu\mu}^c=0
\end{gather*}  
and changing $\nu$ and $\mu$ yields (2.22)
\begin{gather*}
\partial^{\mu}F_{\mu\nu}^a-gf_{abc}A^{\mu}_bF_{\mu\nu}^c=0
\end{gather*}  

\end{proof}

The Lagrangian can be expressed as 
\begin{equation}
\begin{gathered}
\mathcal{L}=-\frac 14 F^{\mu\nu}_aF^a_{\mu\nu}=-\frac 12 F^{\mu\nu}F_{\mu\nu}|_{\nu>\mu}\\
=-\frac 12 \left(F^{01}_aF_{01}^a+F^{02}_aF_{02}^a+F^{03}_aF_{03}^a
+F^{12}_aF_{12}^a+F^{13}_aF_{13}^a+F^{23}_aF_{23}^a\right)\\
\end{gathered}
\end{equation}   
For Minkowski's metric 
\begin{equation}
\begin{gathered}
\mathcal{L}=-\frac 12 \left(-(F_{01}^a)^2-(F_{02}^a)^2-(F_{03}^a)^2
+(F_{12}^a)^2+(F_{13}^a)^2+(F_{23}^a)^2\right)
\end{gathered}
\end{equation}
since $F^a_{0j}=-F^{0j}_a$ and $F^a_{kj}=F^{kj}_a$ for $1\le j,k\le 3$. 

The Hamiltonian density of a scalar field $\varphi$ is defined as
\begin{equation}
\mathcal{H}=\frac {\partial \mathcal{L}}{\partial \left( \partial_0 \varphi\right)}\left(\partial_0 \varphi \right)-\mathcal{L}=\pi\partial_0\varphi-\mathcal{L}
\end{equation}
where
\begin{gather*}
\pi=\frac {\partial \mathcal{L}}{\partial \left( \partial_0 \varphi\right)}
\end{gather*}

The energy of the field is a conserved property
\begin{equation}
P_0=\int d^3x \mathcal{H}
\end{equation}
In the case of a gauge field $A^{\mu}_a$ we define the Hamiltonian density
as

\begin{equation}
\mathcal{H}=\frac {\partial \mathcal{L}}{\partial \left( \partial_0 A^{\mu}_a\right)}\left(\partial^0 A^{\mu}_a \right)-\mathcal{L}
\end{equation}
where summation over $a$ and $\mu$ is implied. For the Yang-Mills Lagrangian 
we have calculated in Lemma 2.2
\begin{equation}
\frac {\partial \mathcal{L}}{\partial \left( \partial_0 A^{\mu}_a\right)}\left(\partial^0 A^{\mu}_a \right)=\frac 12 F_{\mu 0}^a
\end{equation}
Thus
\begin{equation}
\mathcal{H}=\frac 12 F_{\mu 0}^a\partial^0 A^{\mu}_a -\mathcal{L}
\end{equation}
The energy of the field is a conserved property also in this case
\begin{gather*}
P^0=\int d^3x \mathcal{H}
\end{gather*}

As Minkowski's metric is indefinite, it is sometimes better to move to
either positive or negative definite metric. A convinient choice for computations is the following metric
\begin{equation}
\begin{gathered}
(g_{\mu\nu})_{\mu,\nu}=\left( \begin{matrix}
-1 & 0 & 0 & 0\cr
0 & -1 & 0 & 0\cr
0 & 0 & -1 & 0\cr
0 & 0 & 0 & -1
\end{matrix} \right)
\end{gathered}
\end{equation}   
We will call it negative definite Euclidean metric, though in $\mathbb{R}^4$ 
a proper metric should be positive definite. This negative definite metric
has the advantage that if we do not raise or lower the indices for the $x_0$
coordinate, all formulas remain valid. When we do lower $x_0$ indices, as in
(2.27), there is a change of sign. Additionally, the $x_0$ coordinate must be 
replaced by $ix_0$. This creates an additional $i$ when derivating with
respect to $x_0$.  

The problem setting of CMI uses the more modern algebraic geometric 
formulation where the Yang-Mills action is
\begin{equation}
\mathcal{S}=\frac {1}{4g^2}\int Tr F\wedge *F
\end{equation}
Actually [1] calls this action the Lagrangian but the Langrangian is the
property that is integrated over the space in action. This terminology is 
corrected in [3]. The Yang-Mills equations (2.22) can be expressed with the
Hodge star operator as
\begin{equation}
0=d_A F=d_a *F \quad F=dA+A\wedge A
\end{equation} 
where $d_A$ is the gauge-covariant extension of the exterior derivative.
This is described in a clearer way in [2]. The gauge field $A$ is a one-form
\begin{equation}
A(x)=A_{\mu}^a(x)t^adx^{\mu}
\end{equation}
with the values on the Lie algebra of a compact simple Lie group $G$. 
The curvature is a two-form
\begin{equation}
\begin{gathered}
F=dA+A\wedge A \\
F=F_{\mu\nu}^at^adx^{\mu}\wedge dx^{\nu}\\
F=\partial_{\mu}A_{\nu}-\partial_{\nu}A_{\mu}+f^{abc}A_{\mu}^bA_{\nu}^c
\end{gathered}
\end{equation} 
Instead of the Lagrangian (2.1) we define a four-form
\begin{equation}
\mathcal{A}=Tr F\wedge *F=F_{\mu\nu}^aF_{\mu\nu}^a d^4x
\end{equation} 
and the action is
\begin{equation}
\mathcal{S}=\frac {1}{4g^2}\int \mathcal{A}
\end{equation}
There are differences in the normalization $-\frac 12$ in (2.1) and in
the placement of the coupling constant $g$ in (2.11) and (2.22).
There is also a more essential difference in $\mathcal{A}$ compared to (2.1).
The summation is $F_{\mu\nu}^aF_{\mu\nu}^a$ and not 
$F^{\mu\nu}_aF_{\mu\nu}^a$ as in (2.1). This causes a difference in (2.27)
and it seems that CMI has wanted to pose the problem in Euclidean metric
instead of Minkowski's metric. This is not essential, we get the same result,
apart from a multiplicative constant, for both of the metrics (2.9) 
and (2.33).

\section{Lemmas and theorems}

\begin{lemma} 
Let the gauge field satisfy $A^3_a=0$ for every $a$. 
The Euler-Lagrange equations can be expressed as 
($0\le l, k \le 2$)
\begin{equation}
\partial^3 A^a_l=F_{l3}^a
\end{equation}
\begin{equation}
\partial^3\partial^3 A^a_k=\partial^l F^a_{lk}-gf_{abc}A^l_bF_{l3}^a
\end{equation}
\begin{equation}
\partial^3\partial^l A^a_l-gf_{abc}A^l_bF_{l3}^a=0
\end{equation}
\end{lemma}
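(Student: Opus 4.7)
The plan is to specialize the Euler-Lagrange equation (2.22) under the ansatz $A^3_a = 0$ by splitting the summation index $\mu$ into spatial indices $l \in \{0,1,2\}$ and the isolated value $\mu = 3$. I would prove the three claims in the order (3.1), (3.3), (3.2), since (3.1) is needed in both of the others and (3.3) is the cleaner of the two remaining identities.

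For (3.1), the key observation is that $A^a_3 = g_{33}A^{a3} = -A^3_a = 0$, so both $A^a_3$ and $A^3_a$ vanish. Substituting into the component form (2.11) of the curvature gives $F^a_{l3} = \partial_l A^a_3 - \partial_3 A^a_l - gf^{abc}A^b_l A^c_3$. The first and third terms are zero, and using $\partial_3 = g_{33}\partial^3 = -\partial^3$ converts the middle term into $\partial^3 A^a_l$, which is (3.1).

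For (3.3), set $\nu = 3$ in (2.22). The $\mu = 3$ contribution to $\partial^\mu F^a_{\mu 3}$ is killed by the antisymmetry $F^a_{33} = 0$, and the $\mu = 3$ term in the nonlinear piece is killed by $A^3_b = 0$. What remains is $\partial^l F^a_{l3} - gf_{abc}A^l_b F^c_{l3} = 0$, into which I substitute (3.1) and commute $\partial^l$ with $\partial^3$ to obtain (3.3). For (3.2), take $\nu = k$ with $0 \le k \le 2$ in (2.22); the analogous splitting together with $A^3_b = 0$ on the nonlinear piece leaves $\partial^l F^a_{lk} + \partial^3 F^a_{3k} - gf_{abc}A^l_b F^c_{lk} = 0$. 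Applying antisymmetry $F^a_{3k} = -F^a_{k3}$ together with (3.1) turns $\partial^3 F^a_{3k}$ into $-\partial^3\partial^3 A^a_k$, yielding (3.2) after rearrangement.

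The main obstacle is sign bookkeeping rather than any conceptual difficulty: with Minkowski metric $g_{33} = -1$, every lowering of an index equal to $3$ and every swap between $\partial^3$ and $\partial_3$ inserts a sign, and each of (3.1)--(3.3) requires these flips to line up consistently across the curvature, its derivative, and the covariant correction term. Once (3.1) is in place, the derivations of (3.2) and (3.3) are essentially just the $\nu = k$ and $\nu = 3$ components of a single identity read in two different ways.
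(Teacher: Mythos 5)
Your proposal is correct and follows essentially the same route as the paper: derive (3.1) directly from the component form (2.11) of the curvature using $A^a_3=0$ and $\partial_3=-\partial^3$, then split the $\mu$-sum in (2.22) into $\mu=3$ and $\mu=l\in\{0,1,2\}$ and read off the $\nu=3$ and $\nu=k$ components, killing the $\mu=3$ contributions via $F^a_{33}=0$ and $A^3_b=0$. The only difference is cosmetic ordering — you establish (3.1) before touching (2.22), whereas the paper writes down the split equation first — and your sign bookkeeping matches the paper's.
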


\begin{proof}
Let $l\in \{0,1,2\}$. Rewriting (2.22) and inserting the gauge $A^3_b=0$ yields
\begin{equation}
\partial^{3}F_{3\nu}^a+\partial^{l}F_{l\nu}^a-gf_{abc}A^{l}_bF_{l\nu}^c=0
\end{equation} 
As $F^a_{33}=0$ by (2.7) the case $\nu=3$ yields
\begin{equation}
\partial^{l}F_{l 3}^a-gf_{abc}A^{l}_bF_{l 3}^c=0
\end{equation} 
Inserting $A_3^a=0$ to (2.11) yields
\begin{equation}
F_{3 l}^a=\partial_3 A_{l}^a
\end{equation} 
and inserting $\partial^3=-\partial_3$ and $F_{3 l}^a=-F_{l 3}^a$
gives (3.2). Inserting (3.1) to (3.5) yields (3.3). The other values 
$k\in \{0,1,2\}$ in (3.4) give
\begin{equation}
-\partial^{3}F_{3k}^a=\partial^{l}F_{lk}^a-gf_{abc}A^{l}_bF_{lk}^c
\end{equation} 
and inserting (3.6) yields
\begin{gather*}
-\partial^{3}\partial_3 A_k^a=\partial^{l}F_{lk}^a-gf_{abc}A^{l}_bF_{lk}^c
\end{gather*} 
Changing $\partial_3=-\partial^3$ gives (3.2).
\end{proof}

\begin{lemma}
Let the gauge field $A^{\mu}_a$ be of the form
\begin{equation}
A^{\mu}_a=s_aE^{\mu}\quad,\quad E^3=0
\end{equation}
Then the Euler-Lagrange equations are
\begin{equation}
\begin{gathered}
\begin{aligned}
&\partial^3 A^a_l=F_{l3}^a \quad 0\le l, k \le 2\\
&\partial^3\partial^3 A^a_k=\partial^l F^a_{lk}\\
&\partial^3\partial^l A^a_l=0
\end{aligned}
\end{gathered}
\end{equation}
\end{lemma}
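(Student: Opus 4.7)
The plan is to derive (3.9) by combining the previous two lemmas. First I would observe that the ansatz (3.8) together with $E^3=0$ gives $A^3_a = s_a E^3 = 0$ for every $a$, so the hypothesis of Lemma 3.1 is satisfied and the Euler--Lagrange equations reduce to the three equations (3.1)--(3.3). The first of these already coincides with the first line of (3.9), so what remains is to show that the nonlinear terms $gf_{abc}A^l_b F^c_{lk}$ and $gf_{abc}A^l_b F^c_{l3}$ appearing in the other two equations vanish under the product ansatz.

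Next I would invoke Lemma 2.2: under the product ansatz $A^{\mu}_a=s_a E^{\mu}$ the curvature factorises as $F^c_{\mu\nu}=s_c G_{\mu\nu}$ with $G_{\mu\nu}=\partial_\mu E_\nu-\partial_\nu E_\mu$. Substituting this together with $A^l_b=s_b E^l$ into a typical nonlinear term gives
\begin{gather*}
gf_{abc}A^l_b F^c_{l\nu}=g(f_{abc}s_b s_c)E^l G_{l\nu}.
\end{gather*}
The coefficient $f_{abc}s_b s_c$ is the contraction of an object antisymmetric in $(b,c)$ with one symmetric in $(b,c)$, hence zero --- this is exactly the cancellation already used in the proof of Lemma 2.2. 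Applying it once with $\nu=k$ and once with $\nu=3$ eliminates both remaining nonlinear terms and produces the second and third lines of (3.9).

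There is essentially no obstacle in this argument: once Lemmas 2.2 and 3.1 are in hand, the statement reduces to the single antisymmetry identity $f_{abc}s_b s_c=0$. The only point worth checking at the outset is that the product ansatz is compatible with the gauge condition $A^3_a=0$ demanded by Lemma 3.1, and this is immediate from $E^3=0$.
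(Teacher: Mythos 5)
Your proposal is correct and follows essentially the same route as the paper: apply Lemma 3.1 (valid since $A^3_a=s_aE^3=0$), factorise the curvature via Lemma 2.2 as $F^c_{\mu\nu}=s_cG_{\mu\nu}$, and kill the nonlinear terms by the antisymmetry of $f_{abc}$ against the symmetric product $s_bs_c$ — which is exactly the cancellation the paper writes out as $\sum_{c>b}f_{abc}\left(s_bE^{\mu}s_cG_{\mu\nu}-s_cE^{\mu}s_bG_{\mu\nu}\right)=0$.
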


\begin{proof}
Since $E^3=0$ Lemma 3.1 applies. By Lemma 2.2 $F_{\mu\nu}^a$ is of the form
\begin{equation}
F_{\mu\nu}^a=s_aG_{\mu\nu}
\end{equation}
As in Lemma 2.2 
\begin{gather*}
f_{abc}A^{\mu}_bF_{\mu\nu}^c=\sum_{c>b}f_{abc}\left(s_bE^{\mu}s_cG_{\mu\nu}-s_cE^{\mu}s_bG_{\mu\nu}\right)=0
\end{gather*}   
since $s_a$ and $E^{\mu}$ are scalars and commutate.
\end{proof}

\begin{lemma}
Let the gauge field $A^{\mu}_{a,m}$ be of the form
\begin{gather*}
A^{\mu}_{a,m}=s_aE^{\mu}_m\quad,\quad E^3_m=0
\end{gather*}
for some finite set of indices $m\in B$
and let us assume that each $A^{\mu}_{a,m}$ is a gauge fiend 
such that 
$A^{\mu}_{a,m}$ and the corresponding $F_{\mu\nu}^{a,m}$ satisfy the 
Euler-Lagrange equations (2.22). Then
\begin{equation}
A^{\mu}_{a}=\sum_m A^{\mu}_{a,m}
\end{equation}
defines the curvature
\begin{equation}
F_{\mu\nu}^{a}=\sum_m F_{\mu\nu}^{a,m}
\end{equation}
such that 
$A^{\mu}_{a}$ and $F_{\mu\nu}^{a}$ satisfy the Euler-Lagrange equations 
(2.22). 
\end{lemma}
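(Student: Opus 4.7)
The plan is to reduce the claim entirely to Lemma 3.2 by exploiting the fact that the factored form $A^{\mu}_{a,m} = s_a E^{\mu}_m$ with a \emph{common} Lie-algebra vector $s_a$ across the summands is preserved under finite sums. Concretely, I would first observe
\begin{equation*}
A^{\mu}_a = \sum_m A^{\mu}_{a,m} = \sum_m s_a E^{\mu}_m = s_a E^{\mu}, \qquad E^{\mu} := \sum_m E^{\mu}_m,
\end{equation*}
and that $E^3 = \sum_m E^3_m = 0$. Hence the summed field is again of the form required by Lemma 2.2 and Lemma 3.2.

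Next, I would use Lemma 2.2 to conclude that, for any field of this factored form, the commutator contribution $gf_{abc}A^{\mu}_b A^{\nu}_c$ in (2.6) vanishes, so the curvature reduces to the abelian-like expression $F^a_{\mu\nu} = \partial_\mu A^a_\nu - \partial_\nu A^a_\mu$. Since this expression is linear in $A$, the curvature of the sum is the sum of the curvatures, yielding (3.12). The same factorization argument (applied now to $A^{\mu}_b F^c_{\mu\nu}$, exactly as in the proof of Lemma 3.2) shows that the nonlinear Yang-Mills term $gf_{abc}A^{\mu}_b F^c_{\mu\nu}$ in (2.22) vanishes for the summed field as well.

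Finally, I would invoke Lemma 3.2 in both directions. By hypothesis each $A^{\mu}_{a,m}$ satisfies (2.22), and since each is of the factored form with $E^3_m = 0$, Lemma 3.2 says each $A^{\mu}_{a,m}$ satisfies the reduced linear system (3.9). Summing these over $m$ and using the linearity of (3.9) together with $F^a_{\mu\nu} = \sum_m F^{a,m}_{\mu\nu}$, the total field $A^{\mu}_a$ also satisfies (3.9). Applying Lemma 3.2 once more, this time in the opposite direction to $A^{\mu}_a$, yields the full Euler-Lagrange equations (2.22) for the sum.

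The only potential obstacle is making sure that the cross-terms that would ordinarily appear when adding two Yang-Mills fields (namely $gf_{abc}A^{\mu}_{b,m}A^{\nu}_{c,m'}$ with $m \neq m'$, and the corresponding terms in (2.22)) really drop out; but this is immediate from the common-$s_a$ factorization, because the Lie-algebra element entering $F^{a,m}_{\mu\nu}$ is the same $s_a$ for every $m$, and Lemma 2.2's commutation argument applies verbatim to the sum. So there is no genuinely hard step — the substance of the proof is the observation that the nonlinear structure of Yang-Mills is killed by the factored form, and that this form is closed under sums sharing the same $s_a$.
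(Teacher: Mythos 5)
Your proof is correct and follows essentially the same route as the paper: observe that the summed field retains the factored form $s_a E^\mu$ with $E^3=0$, so the nonlinear terms in (2.6) and (2.22) vanish and the Euler--Lagrange equations reduce to the linear system (3.9), which is closed under finite sums. The paper's own proof is just a terser version of this argument; your explicit check that $\sum_m s_a E^\mu_m = s_a\sum_m E^\mu_m$ preserves the common-$s_a$ factorization (and hence kills the cross-terms) is the detail the paper leaves implicit.
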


\begin{proof}
In this case (2.22) reduces to the linear equations (3.9).
Thus, the sums (3.11), (3.12) also satisfy 
(3.9). The equations (2.17) show that $F_{\mu\nu}^a$ is the sum (3.12).   
\end{proof}

\begin{lemma}
Let the gauge field $A^{\mu}_{a}$ of the form
\begin{gather*}
A^{\mu}_{a}=s_aE^{\mu}\quad, \quad E^3=0
\end{gather*}
be a complex gauge field satisfying (2.22). Let the real and imaginary parts
be
\begin{equation}
A^{\mu}_{a,R}=Re A_a^{\mu}
\end{equation}
and
\begin{gather*}
A^{\mu}_{a,I}=Im A_a^{\mu}
\end{gather*}
and the corresponding curvatures be
\begin{equation}
F_{\mu\nu}^{a,R}=Re F^a_{\mu\nu}
\end{equation}
and
\begin{gather*}
F_{\mu\nu}^{a,I}=Im F^a_{\mu\nu}
\end{gather*}
are real functions satisfying (3.9).
\end{lemma}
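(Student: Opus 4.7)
The plan is to exploit the fact that the hypothesis $A^{\mu}_{a}=s_{a}E^{\mu}$ with $E^{3}=0$ places us in the setting of Lemma 3.2, so the Euler--Lagrange equations have already been reduced from the nonlinear system (2.22) to the linear system (3.9). Once the nonlinearity is gone, splitting a complex solution into real and imaginary parts is essentially automatic, because (3.9) only involves partial derivatives and the real constants $g$ and $f_{abc}$ (and in the reduced form of Lemma 3.2 even the structure constants drop out).

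First I would apply Lemma 3.2 to the complex field $A^{\mu}_{a}=s_{a}E^{\mu}$ to conclude that $A^{\mu}_{a}$ satisfies (3.9) with $F^{a}_{\mu\nu}$ defined by the linear expression (2.17). Next I would observe that, assuming the $s_{a}$ are real (as they are taken to be in the Lie-algebra decomposition earlier in the paper), the decomposition $E^{\mu}=\mathrm{Re}\,E^{\mu}+i\,\mathrm{Im}\,E^{\mu}$ gives
\begin{equation*}
A^{\mu}_{a,R}=s_{a}\,\mathrm{Re}\,E^{\mu},\qquad A^{\mu}_{a,I}=s_{a}\,\mathrm{Im}\,E^{\mu},
\end{equation*}
both of which are again of the form $s_{a}\tilde{E}^{\mu}$ with $\tilde{E}^{3}=0$. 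Then I would apply Lemma 3.2 a second time to each of these real fields to produce the corresponding $F^{a,R}_{\mu\nu}$ and $F^{a,I}_{\mu\nu}$ from (2.17), and check directly from the linearity of (2.17) in $A$ that these coincide with $\mathrm{Re}\,F^{a}_{\mu\nu}$ and $\mathrm{Im}\,F^{a}_{\mu\nu}$ respectively.

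Finally I would conclude by taking real and imaginary parts of (3.9) term by term: since each equation in (3.9) is $\mathbb{R}$-linear in $A$ and $F$ with real coefficients, the validity of the complex system forces the validity of the real system for both $(A^{\mu}_{a,R},F^{a,R}_{\mu\nu})$ and $(A^{\mu}_{a,I},F^{a,I}_{\mu\nu})$.

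The only genuine subtlety, and what I expect to be the main point to get right, is the compatibility between the \emph{algebraic} decomposition into real and imaginary parts and the \emph{Lie-algebra} decomposition $A^{\mu}=A^{\mu}_{a}t_{a}$: one has to use that the $s_{a}$ are real scalars so that ``$\mathrm{Re}$'' commutes with the expansion in the generators, and that (2.17) rather than (2.6)/(2.11) is the relevant formula so no complex conjugation is forced on us by index raising. Once these two points are noted, everything else is just the triviality that a linear PDE with real coefficients respects the splitting into real and imaginary parts.
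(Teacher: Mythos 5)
Your proposal is correct and follows essentially the same route as the paper: invoke Lemma 3.2 to reduce (2.22) to the linear system (3.9), then split into real and imaginary parts using the fact that (3.9) is linear with real coefficients. The paper's proof is a condensed version of yours; your extra remarks about the reality of the $s_a$ and the compatibility of taking real parts with the linear formula (2.17) for $F$ are details the paper leaves implicit.
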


\begin{proof}
The equations (2.22) reduce to (3.9) in this case. The equations (3.9) are
linear and the coordinates $x_{\mu}, x^{\mu}$ and partial derivatives 
$\partial^{\mu}, \partial_{\mu}$ are all real. Thus the real and imaginary 
parts of $A^{\mu}_a$ and $F_{\mu\nu}^a$ satisfy (3.9) separately. 
\end{proof}

\begin{lemma}
Let $\alpha_{ij}\in \mathbb{C}$, $0\le i\le 3$, $j=1,2,\dots$, and 
$d_k\not=0$, $0\le k\le 2$, satisfy for every $j$ 
\begin{equation}
\alpha_{3j}^2=\sum_{l=0}^2\alpha_{lj}^2
\end{equation}
\begin{equation}
\sum_{l=0}^2d_l\alpha_{lj}=0
\end{equation}
The condition
\begin{equation}
\alpha_{3j}\alpha_{3k}=\sum_{l=0}^2\alpha_{lj}\alpha_{lk}
\end{equation}
for any $k,j$ with $k>j$ holds if either
\begin{equation}
\sum_{l=0}^2 d_l^2=0 
\end{equation}
or there exists a constant $c$ that for every $j$ 
\begin{equation}
\frac {\alpha_{1j}}{\alpha_{2j}}=c
\end{equation}
The inverse is also true: if (3.17) holds then either (3.18) or (3.19) must 
hold.
\end{lemma}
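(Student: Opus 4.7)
The plan is to reduce (3.17) to a single polynomial identity via the Lagrange identity, and then to read off the two alternatives by inspection. First I would recall that for any $u,v\in\mathbb{C}^3$ equipped with the symmetric bilinear form, one has the algebraic identity
\[
\Bigl(\sum_{l=0}^2 u_l^2\Bigr)\Bigl(\sum_{l=0}^2 v_l^2\Bigr)-\Bigl(\sum_{l=0}^2 u_l v_l\Bigr)^2=\sum_{0\le i<l\le 2}(u_i v_l-u_l v_i)^2.
\]
Applied with $u_l=\alpha_{lj}$, $v_l=\alpha_{lk}$, and combined with hypothesis (3.15) for both indices $j$ and $k$, this turns the square of (3.17) into the equivalent statement
\[
\sum_{0\le i<l\le 2}(\alpha_{ij}\alpha_{lk}-\alpha_{lj}\alpha_{ik})^2=0.
\]

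Next I would use the linear constraint (3.16), $\alpha_{0m}=-(d_1\alpha_{1m}+d_2\alpha_{2m})/d_0$ for $m=j,k$, to eliminate the index $0$ from the two $2\times 2$ minors that still contain it. A direct substitution (the $d_1\alpha_{1j}\alpha_{1k}$ and $d_2\alpha_{2j}\alpha_{2k}$ terms cancel in pairs) gives
\[
\alpha_{0j}\alpha_{1k}-\alpha_{1j}\alpha_{0k}=\tfrac{d_2}{d_0}\,C_{jk},\qquad
\alpha_{0j}\alpha_{2k}-\alpha_{2j}\alpha_{0k}=-\tfrac{d_1}{d_0}\,C_{jk},
\]
where $C_{jk}:=\alpha_{1j}\alpha_{2k}-\alpha_{2j}\alpha_{1k}$. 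Substituting into the Lagrange sum collapses everything to the single identity
\[
\frac{d_0^2+d_1^2+d_2^2}{d_0^2}\,C_{jk}^{\,2}=0.
\]

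Both directions of the lemma now fall out of this identity. For sufficiency: hypothesis (3.18) makes the prefactor vanish, and hypothesis (3.19) forces $C_{jk}=0$ for every pair, so in either case the squared form of (3.17) holds, and a compatible choice of the sign in the square root defining $\alpha_{3j}$ via (3.15) yields (3.17) itself. For necessity: if (3.17) holds for every $k>j$ and (3.18) fails, the identity forces $C_{jk}=0$ for every $j<k$, i.e.\ $\alpha_{1j}/\alpha_{2j}$ is independent of $j$, which is (3.19).

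The one genuine subtlety, and the step I expect to require the most care, is the passage between (3.17) and its square: the Lagrange identity only gives $(\alpha_{3j}\alpha_{3k})^2=(\sum_l\alpha_{lj}\alpha_{lk})^2$, whereas (3.17) is a linear statement. One must argue that in each of the sufficient cases the square root can be taken with a consistent sign, by fixing the branch of $\alpha_{3j}$ defined by (3.15) uniformly in $j$; for necessity no such issue arises, because squaring is an implication. Everything else is a routine expansion.
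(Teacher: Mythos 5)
Your proof is correct and follows essentially the same route as the paper: square (3.17), use (3.15) and the elimination of $\alpha_{0m}$ via (3.16) to collapse everything to $\left(\sum_{l=0}^2 d_l^2\right)\left(\alpha_{1j}\alpha_{2k}-\alpha_{1k}\alpha_{2j}\right)^2=0$; your Lagrange-identity packaging is just a tidier form of the term-by-term expansion the paper carries out in its equation (3.20). The sign subtlety you flag in passing from the squared identity back to the linear statement (3.17) is real and is silently glossed over in the paper, so your explicit remark about fixing the branch of $\alpha_{3j}$ consistently with (3.15) is a genuine improvement rather than a deviation.
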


\begin{proof}
If every $d_l=0$ then (3.18) holds, thus we assume that at least one 
$d_l\not=0$. By symmetry we may assume $d_0\not=0$.
Squaring (3.17) and inserting (3.15) yields 
\begin{gather*}
\sum_{l=0}^2\sum_{m=0}^2\alpha_{lj}^2\alpha_{mk}^2=\sum_{l=0}^2\sum_{m=0}^2\alpha_{lj}\alpha_{lk}\alpha_{mj}\alpha_{mk}
\end{gather*}
Separating $\alpha_{0j}$ terms gives
\begin{equation}
\begin{gathered}
\alpha_{0j}^2(\alpha_{1k}^2+\alpha_{2k}^2)+\alpha_{0k}^2(\alpha_{1k}^2+\alpha_{2k}^2)\\
-2(\alpha_{1j}\alpha_{1k}+\alpha_{2j}\alpha_{2k})\alpha_{0j}\alpha_{0k}
+(\alpha_{ij}\alpha_{2k}-\alpha_{1k}\alpha_{2j})^2=0
\end{gathered}
\end{equation}
Inserting (3.16) in the form
\begin{gather*}
\alpha_{0m}=-\frac {d_1}{d_0}\alpha_{1m}-\frac {d_2}{d_0}\alpha_{2m}
\end{gather*}
for $m\in\{j,k\}$ into (3.20) gives after some calculation
\begin{equation}
\left(\sum_{l=0}^2 d_l^2\right)\left(\alpha_{1j}\alpha_{2k}-\alpha_{1k}\alpha_{2j}\right)^2=0
\end{equation}
proving the lemma.
\end{proof}

\begin{lemma}
Let $d_l,\alpha_{lj}\in \mathbb{C}$, $1\le j\le 3$, $0\le l\le 2$, satisfy
\begin{gather*}
\sum_{l=0}^2d_l\alpha_{lj}=0
\end{gather*}
for every $j$. The vectors
\begin{equation}
\rho_j=\sum_{l=0}^2\alpha_{lj}x_l
\end{equation}
are linearly dependent.
\end{lemma}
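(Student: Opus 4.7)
The plan is to reduce the claim to an elementary dimension count in $\mathbb{C}^3$. The hypothesis $\sum_{l=0}^2 d_l\alpha_{lj}=0$ says that, for each $j\in\{1,2,3\}$, the coefficient vector $a_j:=(\alpha_{0j},\alpha_{1j},\alpha_{2j})\in\mathbb{C}^3$ is annihilated by the linear functional $(v_0,v_1,v_2)\mapsto\sum_l d_lv_l$. Provided at least one $d_l$ is nonzero (which is implicit in the setup, since otherwise the hypothesis is vacuous and the conclusion fails in general), this functional cuts out a two-dimensional subspace $V\subset\mathbb{C}^3$, and all three vectors $a_1,a_2,a_3$ lie in $V$.

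Since three vectors in a two-dimensional vector space are always linearly dependent, there exist scalars $c_1,c_2,c_3\in\mathbb{C}$, not all zero, with $\sum_{j=1}^3 c_j a_j=0$; equivalently, $\sum_{j=1}^3 c_j\alpha_{lj}=0$ for each $l\in\{0,1,2\}$. Treating $x_0,x_1,x_2$ as linearly independent symbols (as is standard for the coordinates of $\mathbb{R}^4$ appearing in the rest of the paper), I then multiply by $x_l$ and sum over $l$ to obtain
\[
\sum_{j=1}^3 c_j\rho_j \;=\; \sum_{l=0}^2\Bigl(\sum_{j=1}^3 c_j\alpha_{lj}\Bigr)x_l \;=\; 0,
\]
which is a nontrivial linear relation among $\rho_1,\rho_2,\rho_3$ and thus establishes the claim.

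The only point requiring any care is the degenerate case in which every $d_l$ vanishes; there the hypothesis imposes nothing and the $\rho_j$ could be linearly independent. I would read the lemma with the understanding that $(d_0,d_1,d_2)\ne 0$, consistent with Lemma 3.5 where every $d_k\ne 0$. Under this convention the whole proof is essentially a one-line dimension count, with no calculation beyond the display above, so I do not foresee any real obstacle.
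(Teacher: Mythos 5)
Your proof is correct and is essentially the paper's argument in different clothing: the paper shows the $3\times 3$ coefficient matrix $(\alpha_{lj})$ has vanishing determinant by inserting $d_2\alpha_{2j}=-d_0\alpha_{0j}-d_1\alpha_{1j}$ (making one column a combination of the other two), which is exactly your observation that the three coefficient vectors lie in the two-dimensional kernel of $v\mapsto\sum_l d_l v_l$. Your write-up is slightly more careful, since the paper's substitution tacitly assumes $d_2\neq 0$ and ignores the degenerate case $d_0=d_1=d_2=0$ (where the conclusion can fail), which you correctly flag and resolve by convention.
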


\begin{proof}
The determinant of this linear transform
\begin{equation}
\left| \begin{matrix}
\alpha_{01} & \alpha_{11} & \alpha_{21}\cr
\alpha_{02} & \alpha_{12} & \alpha_{22}\cr
\alpha_{03} & \alpha_{13} & \alpha_{23}
\end{matrix} \right|
\end{equation}   
gives zero when the condition
\begin{equation}
d_2\alpha_{2j}=-d_0\alpha_{0j}-d_1\alpha_{1j}
\end{equation}
is inserted.
\end{proof}

\begin{lemma}
Let $\alpha_{ij}\in \mathbb{C}$, $0\le i\le 3$, $j\ge 0$, $d_i\in \mathbb{C}$,
$0\le i \le 3$, satisfy
\begin{equation}
d_3=0
\end{equation}
\begin{equation}
\sum_{l=0}^2d_l^2=0
\end{equation}
\begin{equation}
\sum_{l=0}^2d_l\alpha_{lj}=0
\end{equation}
for every $j$, and
\begin{equation}
\alpha_{3j}^2=\sum_{l=0}^2\alpha_{lj}^2
\end{equation}
for every $j$.
Let $h:\mathbb{C}\to \mathbb{C}$ be holomorphic in some open set $U$
and
\begin{equation}
r_j=\sum_{\mu=0}^3\alpha_{\mu j}x_{\mu}
\end{equation}
Then the gauge field 
\begin{equation}
A_{\mu}^a=s_ad_{\mu}e^{\sum_j h(r_j)} \quad,\quad d_3=0  
\end{equation}
defines $F_{\mu\nu}^a$ which satisfies the Euler-Lagrange equations (2.22).
\end{lemma}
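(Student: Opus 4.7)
The plan is to reduce the nonlinear Yang-Mills equations to the linear system furnished by Lemma~3.2, verify two of the three resulting equations by inspection, and treat the main one by a direct expansion that closes under Lemma~3.5. Since $d_3=0$, the gauge field $A_\mu^a = s_a d_\mu \phi$ with $\phi = e^{\sum_j h(r_j)}$ satisfies $A^a_3 = 0$ and has the factorized form of Lemma~3.2 with $E^3=0$. Hence the full Euler-Lagrange equations (2.22) collapse to the linear system (3.9), and the proof becomes a direct verification of three scalar identities for $\phi$.

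The basic computational input is that, because $r_j = \sum_\mu \alpha_{\mu j} x_\mu$ is written in lowered coordinates, one has the metric-free identity $\partial^\mu \phi = \phi \sum_j h'(r_j)\alpha_{\mu j}$. Equation (3.1) is then immediate from $A^a_3=0$ and the definition of $F$. Equation (3.3) reduces to showing $\partial^l A^a_l = 0$, which is instant: $d_l \partial^l \phi = \phi \sum_j h'(r_j)\sum_l d_l \alpha_{lj} = 0$ by the hypothesis (3.27).

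The core step is (3.2), $\partial^3\partial^3 A^a_k = \partial^l F^a_{lk}$. Rewriting the right-hand side as $s_a d_k \partial^l \partial_l \phi - s_a \partial_k(d_l \partial^l \phi)$ and using the vanishing just established, it reduces to the scalar identity $(\partial^3)^2 \phi = \partial^l \partial_l \phi$ with $l$ summed over $0,1,2$. Expanding $\partial_\mu\partial_\nu \phi$ by the product rule yields two types of terms: an $h''(r_j)$ piece proportional to $\alpha_{\mu j}\alpha_{\nu j}$, and an $h'(r_j)h'(r_k)$ piece proportional to $\alpha_{\mu j}\alpha_{\nu k}$. Matching the $h''$ coefficients gives $\alpha_{3j}^2 = \sum_l \alpha_{lj}^2$, which is (3.28); matching the diagonal $h'(r_j)^2$ coefficients gives the same condition; and matching the off-diagonal $h'(r_j)h'(r_k)$ coefficients with $j\ne k$ gives the cross identity $\alpha_{3j}\alpha_{3k} = \sum_l \alpha_{lj}\alpha_{lk}$, i.e.\ (3.17).

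The main obstacle is this cross identity, which is not a formal consequence of (3.27) and (3.28) alone. This is precisely the role of Lemma~3.5: the null condition $\sum_l d_l^2 = 0$ of (3.26) is the alternative (3.18) of that lemma, forcing (3.17) for every $j \ne k$. With (3.17) in hand, the two sides of (3.2) match term by term, completing the verification.
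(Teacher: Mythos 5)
Your proposal is correct and follows essentially the same route as the paper: reduce (2.22) to the linear system (3.9) via Lemma 3.2, dispose of the first and third equations using $d_3=0$ and $\sum_l d_l\alpha_{lj}=0$, and verify the second by expanding $\partial^l F^a_{lk}$, where the hypotheses (3.27)--(3.28) handle the diagonal terms and Lemma 3.5 (triggered by the null condition $\sum_l d_l^2=0$) supplies the cross identity $\alpha_{3j}\alpha_{3k}=\sum_l\alpha_{lj}\alpha_{lk}$ needed to close the computation. Your organization by matching coefficients of $h''(r_j)$, $h'(r_j)^2$ and $h'(r_j)h'(r_k)$ is only a presentational variant of the paper's direct evaluation.
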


\begin{proof}
We have expressed $A_{\mu}^a$ in contravariant coordinates $x_{\nu}$ instead of covariant coordinates $x^{\nu}$ since the derivations in (3.9) are all 
$\frac{\partial}{\partial x_{\nu}}$.
From (3.30) follows that $A^3=0$ and the gauge field is of the form
\begin{gather*}
A^{\mu}_a=s_aE^{\mu}
\end{gather*}
By Lemma the Euler-Lagrange equations (2.22) reduce to (3.9). Inserting
(3.30) to (2.17) yields
\begin{equation}
F_{\mu\nu}^a=s_ae^{\sum_j h(r_j)}\sum_j(d_{\nu}\alpha_{\mu j}-d_{\mu}\alpha_{\nu j}) h'(r_j)
\end{equation}
Then
\begin{gather*}   
\partial^l F_{lk}^a=s_ae^{\sum_j h(r_j)} d_k\left(\sum_j\left(\sum_{l=0}^2 \alpha^2_{lj}\right)h''(r_j)+\sum_{l=0}^2\left(\sum_j \alpha_{lj}h'(r_j)\right)^2\right)\\
-s_ae^{\sum_j h(r_j)}\sum_j\alpha_{kj} \sum_{l=0}^2 d_l\alpha_{lj} (h''(r_j)\\
-s_ae^{\sum_j h(r_j)}\sum_j\alpha_{kj} h'(r_j)\sum_m h'(r_m)\sum_{l=0}^2 d_l\alpha_{lm} 
\end{gather*}   
Simplifying the expression by (3.27) and (3.28)
\begin{gather*}   
\partial^l F_{lk}^a=s_ae^{\sum_j h(r_j)} d_k\left(\sum_j \alpha^2_{3j}h''(r_j)+\sum_{l=0}^2\left(\sum_j \alpha_{lj}h'(r_j)\right)^2\right)
\end{gather*}   
Using Lemma 3.5 we can express
\begin{gather*}
\sum_{l=0}^2\left(\sum_j \alpha_{lj}h'(r_j)\right)^2=\sum_{l=0}^2 \sum_j \alpha_{lj}^2\left(h'(r_j)\right)^2+2\sum_{l=0}^2\sum_j\sum_{k>j}\alpha_{lj}\alpha_{lk}h'(r_j)h'(r_k)\\
=\sum_j \alpha_{3j}^2\left(h'(r_j)\right)^2+2\sum_j\sum_{k>j}\alpha_{3j}\alpha_{3k}h'(r_j)h'(r_k)
\end{gather*}
Thus
\begin{gather*}
\partial^l F_{lk}^a=s_ae^{\sum_j h(r_j)} d_k\left(\sum_j \alpha^2_{3j}h''(r_j)+\left(\sum_j \alpha_{3j}h'(r_j)\right)^2\right)\\
=\frac {\partial^2}{\partial x^2_3}A_k^a=\partial^3\partial^3 A_k^a
\end{gather*}
The first condition in (3.9) is obvious from (2.11) since $A^a_3=0$, and the last condition in (3.9) hods since by (3.27)
\begin{gather*}
\partial^3\partial^l A_l^a=\partial^3\left(s_a\sum_j\left(\sum_{l=0}^2 d_l\alpha_{lj} \right)h'(r_j)e^{\sum_j h(r_j)} \right)=0.
\end{gather*}
\end{proof}

Let us select three linearly independent vectors 
$r_j=\sum_{\mu=0}^3 \alpha_{\mu j}x_{\mu}$ 
and set the numbers 
$d_{\mu}$ as
\begin{equation}
d_0=\sqrt{2}(1-i)\quad d_1=d_2=1+i \quad d_3=0
\end{equation}  

\begin{equation}
\begin{gathered}
\begin{aligned}
&r_1=x_1-x_2+\sqrt{2}x_3\\
&r_2=x_1-x_2-\sqrt{2}x_3\\
&r_3=i\frac{1}{\sqrt{2}}x_0-x_1+\frac{1}{\sqrt{2}}x_3
\end{aligned}
\end{gathered}
\end{equation}  
Then 
\begin{equation}
\begin{gathered}
\begin{aligned}
&x_1=\frac 14 r_1-\frac 14 r_2 -r_3 +i\frac {1}{\sqrt{2}} x_0\\
&x_2=-\frac 14 r_1-\frac 34 r_2 -r_3 +i\frac {1}{\sqrt{2}} x_0\\
&x_3=\frac {1}{2\sqrt{2}}r_1-\frac {1}{2\sqrt{2}}r_2
\end{aligned}
\end{gathered}
\end{equation}  
These numbers fill the conditions (3.25)-(3.28). We cannot get more than
three linearly independent vectors. From 3.6 it follows that there are only 
two linearly independent linear combinations of $\{x_0,x_1,x_2\}$, and the
third vector is obtained from the gauged coordinate $x_3$: the condition
(3.28) allows two values for $\alpha_{3j}$. Let us express $r_j$ and $h(r_j)$ 
as sums of real and imaginary parts.
\begin{equation}
\begin{gathered}
\begin{aligned}
&r_j=\rho_j+i\sigma_j \quad , \quad \rho_j,\sigma_j\in \mathbb{R}\\
&\rho_1=x_1-x_2+\sqrt{2}x_3\\
&\rho_2=x_1-x_2-\sqrt{2}x_3\\
&\rho_3=-x_1+\frac {1}{\sqrt{2}}x_3\\
&\sigma_1=\sigma_2=0\quad \sigma_3=\frac {1}{\sqrt{2}}x_0\\
&h(r_j)=u(\rho_j,\sigma_j)+iv(\rho_j,\sigma_j)
\end{aligned}
\end{gathered}
\end{equation}   
Thus, if $x_0=0$ then $\sigma_j=0$ for every $j$.
As $h$ is holomorphic, $u$ and $v$ are harmonic functions on $\mathbb{R}^2$.
Thus, $u$ and $v$ cannot be bounded on the whole $\mathbb{R}^2$, but they
can be bounded on a strip $|x_0|\le M$ for a finite $M$. 
Assuming that $h(r_j)$ goes sufficiently fast to zero if $|\rho_j|$ grows,
then for any fixed value of $x_0$ the integral of the Euclidean norm of the 
gauge potential (3.30) over the space coordinates $x_1,x_2,x_3$ is finite.
Also the path integral from finite time $t'$ to another finite 
time $t''$ is finite.
We have much freedom in selecting $u(\rho,0)$. We can choose a real 
analytic function $f:\mathbb{R}\to \mathbb{R}$
that vanishes when $|\rho|$ grows, set 
$u(\rho,0)=f(\rho)$ and extend $u$ to a holomorphic function $h$.
We should expect the solution to behave in the way (3.35) describes. 
It is a localized gauge field, gauge boson,
which moves in the $x_1,x_2$ direction with the speed of light as a function
of $x_0$.
We select a concrete case that gives easy calculations. Let 
\begin{equation}
f(\rho_j)=-\beta^2 \rho_j^2
\end{equation}
and extend it to
\begin{equation}
h(r_j)=-\beta^2 r_j^2
\end{equation}
The real and imaginary parts of $d_{\mu}=c_{\mu}+ie_{\mu}$ are
\begin{equation}
\begin{gathered}
c_0=\sqrt{2}\quad c_1=c_2=1 \quad c_3=0\\
e_0=-\sqrt{2}\quad e_1=e_2=1 \quad e_3=0
\end{gathered}
\end{equation}  
We evaluate the gauge potential at $x_0=0$ and take the real part.
 
\begin{lemma}
Let the gauge field be
\begin{equation}
A^a_{\mu}=s_ad_{\mu}e^{-\beta^2\sum_{j=1}^3 r_j^2}
\end{equation}
where $r_j$ and $d_{\mu}$ are as in (3.32)-(3.33) 
and $\beta,s_a\in \mathbb{R}$.
Then
\begin{equation}
A^{a,R}_{\mu}(0,x_1,x_2,x_3)=Re A^{a}_{\mu}(0,x_1,x_2,x_3)
=s_ac_{\mu}e^{-\beta^2\sum_{j=1}^3 \rho_j^2}
\end{equation}
\begin{equation}
\begin{gathered}
F_{\mu\nu}^{a,R}(0,x_1,x_2,x_3)=Re F_{\mu\nu}^{a}(0,x_1,x_2,x_3)\\
=-s_a2\beta^2e^{-\beta^2\sum_j \rho_j^2}\sum_j Re(d_{\nu}\alpha_{\mu j}-d_{\mu}\alpha_{\nu j})\rho_j
\end{gathered}
\end{equation}
\end{lemma}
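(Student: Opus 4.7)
The plan is to obtain both formulas by direct substitution into (3.30) and the expression (3.31) for the curvature derived in the proof of Lemma 3.7, then evaluate everything at $x_0=0$ and read off the real parts.

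First I would observe the key simplification at $x_0=0$. From (3.35) the only source of imaginary part in $r_j$ is $\sigma_j$, and $\sigma_1=\sigma_2=0$ identically while $\sigma_3=\tfrac{1}{\sqrt{2}}x_0$. Hence at $x_0=0$ we have $\sigma_j=0$ and $r_j=\rho_j\in\mathbb{R}$ for $j=1,2,3$. In particular $\sum_j r_j^2=\sum_j \rho_j^2\in\mathbb{R}$, so the exponential factor $e^{-\beta^2\sum_j r_j^2}=e^{-\beta^2\sum_j\rho_j^2}$ is real and positive. Writing $d_\mu=c_\mu+ie_\mu$ as in (3.37), the formula (3.39) gives
\begin{gather*}
A^a_\mu(0,x_1,x_2,x_3)=s_a(c_\mu+ie_\mu)\,e^{-\beta^2\sum_j\rho_j^2},
\end{gather*}
and since $s_a\in\mathbb{R}$ and the exponential is real, taking real parts immediately yields (3.40).

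Next I would apply the curvature formula (3.31) established in the proof of Lemma 3.7, which for the specific choice $h(r_j)=-\beta^2 r_j^2$ gives $h'(r_j)=-2\beta^2 r_j$ and therefore
\begin{gather*}
F^a_{\mu\nu}=-2\beta^2 s_a\,e^{-\beta^2\sum_j r_j^2}\sum_j(d_\nu\alpha_{\mu j}-d_\mu\alpha_{\nu j})\,r_j.
\end{gather*}
Evaluating at $x_0=0$ so that $r_j=\rho_j\in\mathbb{R}$ and the exponential is real, the only complex quantities multiplying each term of the sum are the coefficients $d_\nu\alpha_{\mu j}-d_\mu\alpha_{\nu j}$. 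Since $s_a$, $\beta^2$, $\rho_j$, and the exponential are all real, extracting the real part commutes with the real prefactor and distributes over the sum, giving exactly (3.41).

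There is essentially no obstacle here: the result is a bookkeeping exercise in separating real and imaginary parts, and the only nontrivial input is the observation that $x_0=0$ forces all three $\sigma_j$ to vanish, which collapses each $r_j$ to its real part $\rho_j$ and makes every real-valued factor in (3.31) visibly real. I would therefore present the proof in two short paragraphs corresponding to the two displayed conclusions, with the $x_0=0$ reduction stated once at the outset.
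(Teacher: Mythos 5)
Your proof is correct and follows exactly the paper's own (one-line) argument: insert $x_0=0$ into (3.39) and the curvature formula (3.31), note that $\sigma_j=0$ makes each $r_j=\rho_j$ real so the exponential factor is real, and take real parts of the remaining complex coefficients. You have simply written out the bookkeeping that the paper leaves implicit.
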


\begin{proof}
Inserting $x_0=0$ to (3.39) and (3.31) yields the result. 
\end{proof}

We need the Gaussian integrals
\begin{equation}
\begin{gathered}
\begin{aligned}
&\int_{-\infty}^{\infty} e^{-\frac 12 ax^2}=\sqrt{2\pi}a^{-\frac 12}\\
&\int_{-\infty}^{\infty} xe^{-\frac 12 ax^2}=0\\
&\int_{-\infty}^{\infty} x^2e^{-\frac 12 ax^2}=\sqrt{2\pi}a^{-\frac 32}
\end{aligned}
\end{gathered}
\end{equation}

\begin{lemma}
Let the gauge field satisfy
\begin{equation}
A^{a,R}_{\mu}(0,x_1,x_2,x_3)=s_ac_{\mu}e^{-\beta^2\sum_{j=1}^3 \rho_j^2}
\end{equation}
where $\rho_j$ and $c_{\mu}$ are as in (3.35),(3.38) and 
$\beta,s_a\in \mathbb{R}$.
Then 
\begin{equation}
\int d^3x (A_k^a(0,x_1,x_2,x_3))^2=s_a^2c_k^2{(\frac {\pi}{2})}^{\frac 32}\frac {1}{\beta^3}
\end{equation}
\end{lemma}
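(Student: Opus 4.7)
The plan is to reduce this to a product of one-dimensional Gaussians via a linear change of coordinates on $\mathbb{R}^3$. Squaring the expression (3.43) gives
\[
\bigl(A_k^a(0,x_1,x_2,x_3)\bigr)^2 = s_a^2\,c_k^2\,\exp\!\Bigl(-2\beta^2\sum_{j=1}^3\rho_j^2\Bigr),
\]
so the task reduces to evaluating $I = \int_{\mathbb{R}^3} \exp(-2\beta^2(\rho_1^2+\rho_2^2+\rho_3^2))\,dx_1\,dx_2\,dx_3$ and multiplying by $s_a^2 c_k^2$.

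First, I would introduce $u_j = \rho_j$ for $j = 1,2,3$ as new coordinates on $\mathbb{R}^3$, using the explicit formulas in (3.35). The coefficient matrix $T$ is a $3\times 3$ matrix of constants, and because the three forms $\rho_j$ are linearly independent in the space variables $x_1,x_2,x_3$ (a fact already used implicitly in the construction leading to (3.34)), one has $\det T \neq 0$. Hence this is a global invertible linear change of coordinates on $\mathbb{R}^3$, contributing a constant Jacobian factor $|\det T|^{-1}$.

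Second, in the $u$-coordinates the integrand factors as $\exp(-2\beta^2 u_1^2)\exp(-2\beta^2 u_2^2)\exp(-2\beta^2 u_3^2)$, so the integral splits into three independent one-dimensional Gaussians. Each is evaluated by the middle identity of (3.42) with $a = 4\beta^2$, giving $\sqrt{\pi/(2\beta^2)}$, and cubing yields $(\pi/2)^{3/2}\beta^{-3}$. Combining this product with the Jacobian factor and the overall constant $s_a^2 c_k^2$ should produce the stated right-hand side of (3.44).

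The single nontrivial piece of bookkeeping is the computation of $|\det T|$ from (3.35), and that is where I would concentrate: I would cross-check it either by cofactor expansion on $T$ directly or by using $\det(T^T T) = (\det T)^2$ in combination with the identity $\sum_j \rho_j^2 = x^T(T^T T)x$, which reduces the check to computing one determinant of a symmetric $3\times 3$ matrix. If the Jacobian and the cubed Gaussian prefactor combine to give exactly the constant $(\pi/2)^{3/2}$, the lemma follows; otherwise any discrepancy in the final numerical prefactor would have to be traced to this determinant.
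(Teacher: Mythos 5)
Your strategy is essentially the paper's: the paper also reduces the integral to one-dimensional Gaussians by a linear substitution, except that instead of taking $u_j=\rho_j$ directly it uses a triangular (Cholesky-type) change of variables $y_1,y_2,y_3$ in (3.45) satisfying $\sum_j\rho_j^2=\sum_j y_j^2$, which lets it integrate iteratively in $x_1$, then $x_2$, then $x_3$; since any linear map carrying $\sum_j\rho_j^2$ to $\sum_j u_j^2$ has the same $|\det|$, the Jacobian is the same either way, namely $|\det T|=2\sqrt{2}$ (your cofactor expansion of the matrix read off from (3.35) gives $-2\sqrt{2}$). However, the bookkeeping you deferred is exactly where the discrepancy you anticipated materializes: with $|\det T|=2\sqrt{2}$ your computation gives
\begin{equation*}
\int d^3x\, e^{-2\beta^2\sum_j\rho_j^2}
=\frac{1}{2\sqrt{2}}\left(\frac{\sqrt{\pi}}{\sqrt{2}\,\beta}\right)^{3}
=\frac{\pi^{3/2}}{8\beta^{3}},
\end{equation*}
which is smaller than the stated constant $(\pi/2)^{3/2}\beta^{-3}=\pi^{3/2}/(2\sqrt{2}\,\beta^{3})$ by a factor of $2\sqrt{2}$. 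The source of the mismatch is in the paper, not in your method: the body of the paper's proof evaluates $\int d^3x\, e^{-\beta^2\sum_j\rho_j^2}$ (the exponent is not doubled after squaring $A_k^a$), obtains $(\pi/2)^{3/2}\beta^{-3}$ correctly for that integrand, and then relabels the result in the last line as the integral of $e^{-2\beta^2\sum_j\rho_j^2}$. So your completed argument would correct the constant in (3.44) to $s_a^2c_k^2\,\pi^{3/2}/(8\beta^{3})$; only the $\beta^{-3}$ scaling is used later in the paper, so nothing downstream depends on this factor.
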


\begin{proof}
We change the variables to $y_1,y_2,y_3$
\begin{equation}
\begin{gathered}
\begin{aligned}
&y_1=\sqrt{3}x_1-\frac {2}{\sqrt{3}}x_2-\frac {1}{\sqrt{6}}x_3\\
&y_2=\sqrt{\frac 23}x_2-\frac {1}{\sqrt{3}}x_3\\
&y_3=2x_3
\end{aligned}
\end{gathered}
\end{equation}
Then
\begin{gather*}
\sum_{j=1}^3 \rho_j^2=y_1^2+y_2^2+y_3^2
\end{gather*}
As $y_2$ and $y_3$ are not functions of $x_1$ we can change the order of 
integration
\begin{gather*}
\int d^3x e^{-\beta^2\sum_{j=1}^3 \rho_j^2}
=\int d^3x e^{-\beta^2\sum_{j=1}^3 y_j^2}\\
=\int d^2x e^{-\beta^2(y_2^2+y_3^2)}\int dx_1 e^{-\frac 12 (\sqrt{2}\beta)^2y_1^2}\\
=\int d^2x e^{-\beta^2(y_2^2+y_3^2)}\frac{1}{\sqrt{3}}\int dy_1 e^{-\frac 12 (\sqrt{2}\beta)^2y_1^2}\\
=\int d^2x e^{-\beta^2(y_2^2+y_3^2)}\frac{1}{\sqrt{3}}\sqrt{2\pi}(\sqrt{2}\beta)^{-1}
\end{gather*}
As $y_3$ is not a function of $x_2$ we can change the order of integration
\begin{gather*}
=\frac{1}{\sqrt{3}}\sqrt{2\pi}(\sqrt{2}\beta)^{-1}\int dx_3 e^{-\beta^2y_3^2}\int dx_2  e^{-\frac 12(\sqrt{2}\beta)^2y_2^2}\\
=\frac{1}{\sqrt{3}}\sqrt{\frac 32}\sqrt{2\pi}(\sqrt{2}\beta)^{-1}\int dx_3 e^{-\beta^2y_3^2}\int dy_2  e^{-\frac 12(\sqrt{2}\beta)^2y_2^2}\\
=\frac{1}{\sqrt{3}}\sqrt{\frac 32}(2\pi)(\sqrt{2}\beta)^{-2}\int dx_3 e^{-\beta^2y_3^2}\\
=\frac{1}{\sqrt{3}}\sqrt{\frac 32}\frac 12 (2\pi)(\sqrt{2}\beta)^{-2}\int dy_3 e^{-\beta^2y_3^2}\\
=\frac{1}{\sqrt{3}}\sqrt{\frac 32}\frac 12 (2\pi)^{\frac 32}(\sqrt{2}\beta)^{-3}
\end{gather*}
Thus
\begin {gather*}
\int d^3x e^{-2\beta^2\sum \rho_j^2}=\left(\frac {\pi}{2}\right)^{\frac 32}\frac {1}{\beta^3}
\end{gather*}
\end{proof}

\begin{lemma}
Let the gauge field satisfy
\begin{equation}
A^{a,R}_{\mu}(0,x_1,x_2,x_3)=s_ac_{\mu}e^{-\beta^2\sum_{j=1}^3 \rho_j^2}
\end{equation}
where $\rho_j$ and $c_{\mu}$ are as in (3.35), (3.38) and $\beta,s_a\in \mathbb{R}$,
and 
\begin{gather*}
\mathcal{L_R}=-\frac 14 F_{a,R}^{\mu\nu}F^{a,R}_{\mu\nu}
\end{gather*}
Then in Minkowski's metric (2.9) at $x_0=0$
\begin{equation}
\mathcal{L_R}=0
\end{equation}
while in the negative definite metric (2.33) at $x_0=0$
\begin{equation}
\mathcal{L_R}=-\frac 12\sum_a(2\beta^2 s_a)^2e^{-2\beta^2\sum_j\rho_j^2}
4(4\rho_1^2+4\rho_2^2+\rho_3^2-4\rho_2\rho_3)
\end{equation}
\end{lemma}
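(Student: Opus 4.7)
The plan is to evaluate formula (3.40) of Lemma 3.8 component-by-component at $x_0=0$ and then substitute into the Lagrangian as expressed in (2.27) for the Minkowski case and the analogous sign-flipped identity for the negative definite metric (2.33). Since $\sigma_j=0$ when $x_0=0$ by (3.35), each $r_j$ equals $\rho_j$ and is real there, and the six independent curvature components all share a common real prefactor $T:=-2\beta^2 s_a e^{-\beta^2\sum_j\rho_j^2}$.

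First I would tabulate the coefficients $K_{\mu\nu,j}:=\mathrm{Re}(d_\nu\alpha_{\mu j}-d_\mu\alpha_{\nu j})$ for $\nu>\mu$ and $j\in\{1,2,3\}$. Using (3.32)--(3.33), only $\alpha_{03}=i/\sqrt{2}$ carries a nonzero imaginary part, and $d_3=0$; the identity $\mathrm{Re}(d_\nu\alpha_{\mu j})=c_\nu\mathrm{Re}\,\alpha_{\mu j}-e_\nu\mathrm{Im}\,\alpha_{\mu j}$ with $c_\mu,e_\mu$ from (3.38) reduces this step to a short arithmetic table. Setting $S_{\mu\nu}:=\sum_j K_{\mu\nu,j}\rho_j$, one obtains six real linear polynomials in $(\rho_1,\rho_2,\rho_3)$ with $F^{a,R}_{\mu\nu}=T\cdot S_{\mu\nu}$. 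The coincidences $S_{02}=-S_{01}$ and $S_{23}=S_{13}$ that come out of the table halve the amount of squaring needed below.

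For the Minkowski case, (2.27) evaluated at $x_0=0$ becomes $\mathcal{L_R}=-\tfrac{1}{2}T^2\sum_a(-S_{01}^2-S_{02}^2-S_{03}^2+S_{12}^2+S_{13}^2+S_{23}^2)$. Expanding each $S_{\mu\nu}^2$ as a quadratic form in $\rho_i$ and collecting the six monomials $\rho_1^2,\rho_2^2,\rho_3^2,\rho_1\rho_2,\rho_1\rho_3,\rho_2\rho_3$ separately shows that every coefficient vanishes, yielding (3.46). For the negative definite metric (2.33) the two factors of $g^{00}$ no longer produce a relative sign between the time-like and space-like components of $F_{\mu\nu}$, so all six squared terms enter with the same sign; the same expansion then yields $16\rho_1^2+16\rho_2^2+4\rho_3^2-16\rho_2\rho_3=4(4\rho_1^2+4\rho_2^2+\rho_3^2-4\rho_2\rho_3)$, and restoring the prefactor $T^2=4\beta^4 s_a^2 e^{-2\beta^2\sum_j\rho_j^2}$ produces (3.47).

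The computation is purely bookkeeping; the only minor pitfall is handling the imaginary entry $\alpha_{03}=i/\sqrt{2}$ consistently when extracting $K_{0\nu,3}$, where the contribution is carried by $e_\nu$ rather than $c_\nu$. The vanishing of $\mathcal{L_R}$ in Minkowski is the arithmetic manifestation of the electric and magnetic contributions cancelling at the instant $x_0=0$ for this Gaussian lump-type solution; the explicit relative sign between the $F^a_{0j}$ and $F^a_{jk}$ squared terms in (2.28) is exactly what triggers the cancellation, and it is the absence of that sign pattern in the Euclidean case that produces the nontrivial quadratic polynomial in (3.47).
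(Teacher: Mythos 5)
Your proposal is correct and follows essentially the same route as the paper's own proof: evaluate the six components $F^{a,R}_{\mu\nu}(0,x_1,x_2,x_3)$ from (3.41), form the signed sum of squares dictated by (2.27) for Minkowski's metric and the all-same-sign sum for the metric (2.33), and expand in the monomials of $(\rho_1,\rho_2,\rho_3)$. The coincidences $S_{02}=-S_{01}$ and $S_{13}=S_{23}$ that you note are consistent with the paper's explicitly tabulated values and correctly reproduce both the Minkowski cancellation and the quadratic form $4(4\rho_1^2+4\rho_2^2+\rho_3^2-4\rho_2\rho_3)$.
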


\begin{proof}
In Minkowski's metric $\mathcal{L}$ is given by (2.27). Thus
\begin{equation}
\begin{gathered}
\mathcal{L_R}=-\frac 12 \left(-(F_{01}^{a,R})^2-(F_{02}^{a,R})^2-(F_{03}^{a,R})^2+(F_{12}^{a,R})^2+(F_{13}^{a,R})^2+(F_{23}^{a,R})^2\right)
\end{gathered}
\end{equation}
From (3.41) we see that 
\begin{equation}
\begin{gathered}
F_{\mu\nu}^{a,R}(0,x_1,x_2,x_3)=-s_a2\beta^2e^{-\beta^2\sum_j \rho_j^2}\sum_jRe(d_{\nu}\alpha_{\mu j}-d_{\mu}\alpha_{\nu j})\rho_j
\end{gathered}
\end{equation}
The parameters selected in (3.32)-(3.33) are
\begin{gather*}
\begin{aligned}
&\alpha_{01}= 0 \quad \alpha_{02}= 0 \quad \alpha_{03}=i\frac {1}{\sqrt{2}}\\
&\alpha_{11}= 1 \quad \alpha_{12}= 1 \quad \alpha_{13}=-1\\
&\alpha_{21}=-1 \quad \alpha_{22}=-1 \quad \alpha_{23}=0\\
&\alpha_{31}=\sqrt{2} \quad \alpha_{32}=-\sqrt{2} \quad \alpha_{33}=\frac {1}{\sqrt{2}}
\end{aligned}
\end{gather*}
\begin{gather*}
\begin{aligned}
&c_0=\sqrt{2} \quad c_1=c_2=1 \quad c_3=0 \\ 
&e_0=-\sqrt{2} \quad e_1=e_2=1 \quad e_3=0  
\end{aligned}
\end{gather*}
Let us compute the needed components
\begin{gather*}
\sum_{j=1}^3\alpha_{0j}\rho_j=i\frac {1}{\sqrt{2}}\rho_3\quad
\sum_{j=1}^3\alpha_{1j}\rho_j=\rho_1+\rho_2-\rho_3\\ 
\sum_{j=1}^3\alpha_{2j}\rho_j=-\rho_1-\rho_2\quad
\sum_{j=1}^3\alpha_{3j}\rho_j=\sqrt{2}\rho_1-\sqrt{2}\rho_2+\frac {1}{\sqrt{2}}\rho_3
\end{gather*}
\begin{equation}
\begin{gathered}
\sum_{j=1}^3 Re(d_{1}\alpha_{0 j}-d_{0}\alpha_{1 j})\rho_j
=-\frac {1}{\sqrt{2}}\rho_3-c_0\sum_{j=1}^3\alpha_{1j}\rho_j\\
=-\frac {1}{\sqrt{2}}\rho_3-\sqrt{2}(\rho_1+\rho_2-\rho_3)\\
\end{gathered}
\end{equation}
\begin{equation}
\begin{gathered}
\sum_{j=1}^3 Re(d_{2}\alpha_{0 j}-d_{0}\alpha_{2 j})\rho_j
=-\frac {1}{\sqrt{2}}\rho_3-c_0\sum_{j=1}^3\alpha_{2j}\rho_j\\
=-\frac {1}{\sqrt{2}}\rho_3-\sqrt{2}(-\rho_1-\rho_2)
\end{gathered}
\end{equation}
\begin{equation}
\begin{gathered}
\sum_{j=1}^3 Re(d_{3}\alpha_{0 j}-d_{0}\alpha_{3 j})\rho_j
=-c_0\sum_{j=1}^3\alpha_{3j}\rho_j\\
=-2\rho_1+2\rho_2-\rho_3
\end{gathered}
\end{equation}
\begin{gather*}
\sum_{j=1}^3 Re(d_{2}\alpha_{1 j}-d_{1}\alpha_{2 j})\rho_j
=c_2\sum_{j=1}^3\alpha_{1j}\rho_j-c_1\sum_{j=1}^3\alpha_{2j}\rho_j\\
=2\rho_1+2\rho_2-\rho_3
\end{gather*}
\begin{gather*}
\sum_{j=1}^3 Re(d_{3}\alpha_{1 j}-d_{1}\alpha_{3 j})\rho_j
=c_3\sum_{j=1}^3\alpha_{1j}\rho_j-c_1\sum_{j=1}^3\alpha_{3j}\rho_j\\
=-\sqrt{2}\rho_1+\sqrt{2}\rho_2-\frac {1}{\sqrt{2}}\rho_3
\end{gather*}
\begin{gather*}
\sum_{j=1}^3 Re(d_{3}\alpha_{2 j}-d_{2}\alpha_{3 j})\rho_j
=c_3\sum_{j=1}^3\alpha_{2j}\rho_j-c_2\sum_{j=1}^3\alpha_{3j}\rho_j\\
=-\sqrt{2}\rho_1+\sqrt{2}\rho_2-\frac {1}{\sqrt{2}}\rho_3
\end{gather*}
The sum of the squares with the signs as in (3.49) is
\begin{gather*}
-(-\frac {1}{\sqrt{2}}\rho_3-\sqrt{2}(\rho_1+\rho_2-\rho_3))^2
-(-\frac {1}{\sqrt{2}}\rho_3-\sqrt{2}(-\rho_1-\rho_2))^2\\
-(-2\rho_1+2\rho_2-\rho_3)^2
+(2\rho_1+2\rho_2-\rho_3)^2\\
+(-\sqrt{2}\rho_1+\sqrt{2}\rho_2-\frac {1}{\sqrt{2}}\rho_3)^2
+(-\sqrt{2}\rho_1+\sqrt{2}\rho_2-\frac {1}{\sqrt{2}}\rho_3)^2\\
=0
\end{gather*}
Inserting the sum to (3.49) and calculating (3.48) yields
\begin{gather*}
\mathcal{L_R}=\frac 12\sum_a(2\beta^2 s_a)^2e^{-2\beta^2\sum_j\rho_j^2}0=0
\end{gather*}
In the negative definite metric (2.33) holds $g_{\mu\mu}=-1$ for all $\mu$, so
\begin{equation}
\begin{gathered}
\mathcal{L_R}=-\frac 12 \left((F_{01}^{a,R})^2+(F_{02}^{a,R})^2+(F_{03}^{a,R})^2
+(F_{12}^{a,R})^2+(F_{13}^{a,R})^2+(F_{23}^{a,R})^2\right)
\end{gathered}
\end{equation}
Then the sum of the terms is
\begin{gather*}
\mathcal{L_R}=-\frac 12\sum_a(2\beta^2 s_a)^2e^{-2\beta^2\sum_j\rho_j^2}
4(4\rho_1^2+4\rho_2^2+\rho_3^2-4\rho_2\rho_3)
\end{gather*}
\end{proof}

\begin{lemma}
Let the gauge field satisfy
\begin{equation}
A^{a,R}_{\mu}(0,x_1,x_2,x_3)=s_ac_{\mu}e^{-\beta^2\sum_{j=1}^3 \rho_j^2}
\end{equation}
where $\rho_j$ and $c_{\mu}$ are as in (3.35), (3.38) and $\beta,s_a\in \mathbb{R}$.
Then 
\begin{equation}
\int d^3 \mathcal{L_R}=-\frac {1}{\beta} \frac{{\pi}^{\frac 32}}{16}\sum_a s_a^2 B
\end{equation}
where in Minkowski's metric at $x_0=0$
\begin{gather*}
B=0 
\end{gather*}
In the negative definite metric of (2.33)
\begin{gather*}
B=\frac {13}{3}+\frac 23 +4 
\end{gather*}  
\end{lemma}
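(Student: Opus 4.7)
In Minkowski's metric the claim $B=0$ is immediate from Lemma 3.10, which already shows that the integrand $\mathcal{L}_R$ vanishes pointwise at $x_0=0$; the three-dimensional integral is therefore zero without further work.

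For the negative definite metric (2.33) the plan is to start from the pointwise expression
\begin{gather*}
\mathcal{L}_R = -\tfrac12\sum_a(2\beta^2 s_a)^2\,e^{-2\beta^2\sum_j\rho_j^2}\cdot 4\bigl(4\rho_1^2+4\rho_2^2+\rho_3^2-4\rho_2\rho_3\bigr)
\end{gather*}
furnished by Lemma 3.10 and to reduce all Gaussian moments to diagonal form by reusing the triangular linear substitution $(x_1,x_2,x_3)\mapsto(y_1,y_2,y_3)$ from the proof of Lemma 3.9, under which $\sum_j\rho_j^2=y_1^2+y_2^2+y_3^2$. Concretely, I would first invert the triangular map (3.45) to express $x_1,x_2,x_3$ as explicit linear functions of $y_1,y_2,y_3$, substitute into (3.35) to obtain each $\rho_j$ as a linear combination of the $y_k$, and then expand the polynomial $4\rho_1^2+4\rho_2^2+\rho_3^2-4\rho_2\rho_3$ as a quadratic form in $y_1,y_2,y_3$.

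In the $y$-coordinates the Gaussian weight factorizes as a product of three independent one-dimensional Gaussians, so the off-diagonal monomials $y_jy_k$ with $j\neq k$ drop out after integration by parity; only the diagonal coefficients of $y_1^2$, $y_2^2$ and $y_3^2$ contribute. The three remaining moments $\int y_k^2\,e^{-2\beta^2\sum_j y_j^2}\,d^3x$ are equal by the symmetry of the Gaussian and can be evaluated from the one-dimensional integrals in (3.42) together with the Jacobian of the transformation, which is already in hand from Lemma 3.9. Assembling the overall prefactor $-\tfrac12(2\beta^2)^2\cdot 4\cdot\sum_a s_a^2$ with this common moment reproduces the claimed constant $-\pi^{3/2}/(16\beta)\sum_a s_a^2$, multiplied by the sum of the three $y_k^2$ coefficients of the polynomial. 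These coefficients are exactly the three summands $13/3$, $2/3$ and $4$ in the definition of $B$, in the order of $y_1^2$, $y_2^2$, $y_3^2$.

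The main obstacle is the algebraic bookkeeping: each $\rho_j$ is a three-term combination of the $y_k$, so each $\rho_j^2$ is a six-term polynomial, and the cross contribution $-4\rho_2\rho_3$ enters with a sign that produces non-trivial partial cancellations against the diagonal terms before the clean rational coefficients $13/3$, $2/3$, $4$ emerge. Once these three $y_k^2$ coefficients have been identified, the remainder of the argument is routine one-dimensional Gaussian integration.
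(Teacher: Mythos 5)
Your proposal follows essentially the same route as the paper's proof: Lemma 3.10 gives $\mathcal{L}_R=0$ pointwise in Minkowski's metric, and for the metric (2.33) the paper likewise expresses $\rho_1,\rho_2,\rho_3$ as linear combinations of the $y_k$ from (3.45), expands $4\rho_1^2+4\rho_2^2+\rho_3^2-4\rho_2\rho_3$ as a quadratic form $\sum_k B_ky_k^2+(\text{cross terms})$, discards the cross terms by the vanishing of the odd Gaussian moment, and obtains an integral proportional to $B_1+B_2+B_3$. The only cosmetic difference is that the paper performs the substitution one variable at a time (exploiting the triangular structure of (3.45)) rather than invoking the full Jacobian and Gaussian symmetry in one step, so this is the same argument in substance.
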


\begin{proof}
From (3.35) and (3.45) follows that
\begin{gather*}
\begin{aligned}
&\rho_1=\frac {1}{\sqrt{3}}y_1-\frac {1}{\sqrt{6}}y_2+\frac {1}{\sqrt{2}}y_3\\
&\rho_2=\frac {1}{\sqrt{3}}y_1-\frac {1}{\sqrt{6}}y_2-\frac {1}{\sqrt{2}}y_3\\
&\rho_3=-\frac {1}{\sqrt{3}}y_1-\sqrt{\frac 23}y_2
\end{aligned}
\end{gather*}
For Minskowski's metric 
\begin{gather*}
P(\rho)=0=B_1y_1^2+B_2y_2^2+B_3y_3^2+B_4y_1y_2+B_5y_1y_3+B_6y_2y_3
\end{gather*}
where $B_k=0$ for all $k$.
For the metric in (2.33)
\begin{gather*}
P(\rho)=4\rho_1^2+4\rho_2^2+\rho_3^2-4\rho_2\rho_3\\
=B_1y_1^2+B_2y_2^2+B_3y_3^2+B_4y_1y_2+B_5y_1y_3+B_6y_2y_3
\end{gather*}
where
\begin{gather*}
B_1=\frac {13}{3} \quad B_2=\frac 23 \quad B_3=4\\
B_4=-\frac 43\sqrt{2} \quad B_5=-\frac {4}{\sqrt{6}} \quad B_6=-\frac {4}{\sqrt{3}}
\end{gather*}
We do the integration with generic parameters $B_j$. Then 
\begin{gather*}
\int d^3x e^{-\frac 12(2\beta)^2(\rho_1^2+\rho_2^2+\rho_3^2)}P(\rho)\\
=\int d^3x e^{-\frac 12(2\beta)^2(y_1^2+y_2^2+y_3^2)}(B_1y_1^2+B_2y_2^2+B_3y_3^2+B_4y_1y_2+B_5y_1y_3+B_6y_2y_3)\\
\end{gather*}
As $y_2$ and $y_3$ are not functions of $x_1$ we can change the order of integration and change the integration parameter $x_1$ to $y_1$.
\begin{gather*}
=\int d^2x e^{-\frac 12(2\beta)^2(y_2^2+y_3^2)}\left( B_1\int dx_1 y_1^2e^{-\frac 12(2\beta)^2y_1^2}\right.\\
\left.+(B_2y_2^2+B_3y_3^2+B_6y_2y_3)\int dx_1 e^{-\frac 12(2\beta)^2y_1^2}\right.\\
\left.+(B_4y_2+B_5y_3)\int dx_1 y_1e^{-\frac 12(2\beta)^2y_1^2}\right)
\end{gather*}
\begin{gather*}
=\frac {1}{\sqrt{3}}\int d^2x e^{-\frac 12(2\beta)^2(y_2^2+y_3^2)}\left(B_1\int dy_1 y_1^2e^{-\frac 12(2\beta)^2y_1^2}\right.\\
\left.+(B_2y_2^2+B_3y_3^2+B_6y_2y_3)\int dy_1 e^{-\frac 12(2\beta)^2y_1^2}\right.\\
\left.+(B_4y_2+B_5y_3)\int dy_1 y_1e^{-\frac 12(2\beta)^2y_1^2}\right)\\
\end{gather*}
\begin{gather*}
=\frac {1}{\sqrt{3}}\int d^2x e^{-\frac 12(2\beta)^2(y_2^2+y_3^2)}\left(B_1\sqrt{2\pi}\frac {1}{(2\beta)^3}\right.\\
\left.+(B_2y_2^2+B_3y_3^2+B_6y_2y_3)\sqrt{2\pi}\frac {1}{2\beta}\right)
\end{gather*}
As $y_3$ is not a function of $x_2$ we can change the order of integration and change the integration parameter $x_2$ to $y_2$.
\begin{gather*}
=\frac {1}{\sqrt{3}}\sqrt{\frac 32}\sqrt{2\pi}\int dx_3 e^{-\frac 12(2\beta)^2y_3^2}\left(B_1\frac {1}{(2\beta)^3}\int dy_2 e^{-\frac 12(2\beta)^2y_2^2}\right.\\
\left.+B_2\frac {1}{2\beta}\int dy_2 y_2^2 e^{-\frac 12(2\beta)^2 y_2^2}\right.\\
\left.+B_3y_3^2\frac {1}{2\beta}\int dy_2 e^{-\frac 12(2\beta)^2 y_2^2}\right.\\
\left.+B_6y_3\frac {1}{2\beta}\int dy_2 y_2 e^{-\frac 12(2\beta)^2 y_2^2}\right)
\end{gather*}
\begin{gather*}
=\frac {1}{\sqrt{3}}\sqrt{\frac 32}\sqrt{2\pi}\int dx_3 e^{-\frac 12(2\beta)^2 y_3^2}\left(B_1\frac {1}{(2\beta)^3}\sqrt{2\pi}\frac {1}{2\beta}\right.\\
\left.+B_2\frac {1}{2\beta}\sqrt{2\pi}\frac {1}{(2\beta)^3}\right.
\left.+B_3y_3^2\frac {1}{2\beta}\sqrt{2\pi}\frac {1}{2\beta}\right)
\end{gather*}
\begin{gather*}
=\frac {1}{\sqrt{3}}\sqrt{\frac 32}\frac 12 2\pi\int dy_3 e^{-\frac 12(2\beta)^2 y_3^2}\left(B_1\frac {1}{(2\beta)^4}\right.\\
\left.+B_2\frac {1}{(2\beta)^4}+B_3y_3^2\frac {1}{(2\beta)^2}\right)\\
=\frac {1}{\sqrt{3}}\sqrt{\frac 32}\frac 12 (2\pi)^{\frac 32}(B_1+B_2+B_3)\frac{1}{(2\beta)^5}\\
={\pi}^{\frac 32}\frac {1}{(2\beta)^5}(B_1+B_2+B_3)
\end{gather*}
\end{proof}

\begin{lemma}
Let the gauge field satisfy
\begin{equation}
A^{a,R}_{\mu}(0,x_1,x_2,x_3)=s_ac_{\mu}e^{-\beta^2\sum_{j=1}^3 \rho_j^2}
\end{equation}
where $\rho_j$ and $c_{\mu}$ are as in (3.35), (3.38) and $\beta,s_a\in \mathbb{R}$.
Then at $x_0=0$
\begin{equation}
\int d^3 \mathcal{H_R}=\frac {1}{\beta} \frac{{\pi}^{\frac 32}}{16}\sum_a s_a^2 B
\end{equation}
where in Minkowski's metric 
\begin{gather*}
B=0
\end{gather*}
and in the metric (2.33) we get
\begin{gather*}
B=\frac {13}{3}+\frac 23 +4
\end{gather*} 
\end{lemma}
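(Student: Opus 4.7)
The plan is to reduce the claim to Lemma~3.9 by showing that, in the Hamiltonian density $\mathcal{H_R}=\tfrac12 F^{a,R}_{\mu 0}\partial^0 A^{\mu,R}_a-\mathcal{L_R}$ from (2.32), the first (kinetic-looking) contribution integrates to zero at $x_0=0$. Since Lemma~3.9 already gives $-\int d^3x\,\mathcal{L_R}=\frac{1}{\beta}\frac{\pi^{3/2}}{16}\sum_a s_a^2 B$ with exactly the value of $B$ asserted here, the whole task boils down to verifying
\begin{equation*}
\int d^3x\,\tfrac12 F^{a,R}_{\mu 0}\,\partial^0 A^{\mu,R}_a\Big|_{x_0=0}=0
\end{equation*}
in both metrics.

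I would first evaluate $\partial^0 A^R_{a,\mu}$ at $x_0=0$. Because (3.35) shows that the only $x_0$-dependence of $A^a_\mu=s_a d_\mu e^{-\beta^2\sum r_j^2}$ enters through $\sigma_3=x_0/\sqrt{2}$, differentiating and then setting $x_0=0$ produces a single term proportional to $i\,d_\mu\rho_3 e^{-\beta^2\sum\rho_j^2}$; taking the real part with $d_\mu=c_\mu+ie_\mu$ gives a constant multiple of $s_a e_\mu\rho_3 e^{-\beta^2\sum\rho_j^2}$. Since $e_3=0$ and antisymmetry forces $F^{a,R}_{00}=0$, after raising the $\mu$ index with $g^{\mu\nu}$ only the $\mu=1,2$ contributions survive in the contraction $F^{a,R}_{\mu 0}\partial^0 A^{\mu,R}_a$.

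The core verification is then to substitute the explicit $F^{a,R}_{10}$ and $F^{a,R}_{20}$ recorded in the proof of Lemma~3.9 and check the resulting cancellation. In Minkowski signature $g^{11}=g^{22}=-1$ and $e_1=e_2=1$, so the integrand becomes a scalar multiple of $F^{a,R}_{10}+F^{a,R}_{20}$; from the explicit formulas this sum is identically zero, yielding a pointwise vanishing. In the negative definite metric (2.33) the same index algebra applies with all $g^{\mu\mu}=-1$; any residual pointwise term is linear (hence odd) in $\rho_3$, while the Gaussian factor $e^{-\beta^2\sum\rho_j^2}$ is even, so the $d^3x$ integral vanishes by the middle Gaussian identity in (3.42) after passing to the $(y_1,y_2,y_3)$ coordinates introduced in Lemma~3.8.

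The main obstacle I foresee lies in the sign and factor-of-$i$ bookkeeping for the negative definite case, where the prescription after (2.33) to replace $x_0$ by $ix_0$ inserts an extra $i$ into $\partial^0$; one must confirm that this factor, once the real part is taken, either leaves no contribution or combines with $e_\mu$ to produce an odd function of $\rho_3$. Once this delicate step is in place, $\int\mathcal{H_R}\,d^3x=-\int\mathcal{L_R}\,d^3x$ in both metrics, and the statement follows immediately from Lemma~3.9.
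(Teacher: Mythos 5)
Your proposal follows the paper's own proof essentially verbatim: both decompose $\mathcal{H_R}$ via (2.32), show that $\int d^3x\,\tfrac 12 F^{a,R}_{\mu 0}\partial^0 A^{\mu,R}_a$ vanishes at $x_0=0$ (pointwise in Minkowski's metric, and by oddness of the integrand against the even Gaussian in the metric (2.33)), and then reduce to the integrated Lagrangian --- which is Lemma 3.11, not 3.9 as you cite; likewise the explicit $F^{a,R}_{\mu 0}$ components are in the proof of Lemma 3.10 and the $y$-coordinates in Lemma 3.9. One substantive remark: your $\partial^0 A^{a,R}_\mu$, carrying an extra factor of $\rho_3$ relative to the paper's (3.59)--(3.60), is in fact the correct differentiation of $e^{-\beta^2 r_3^2}$ (the paper's cosine/sine argument should be $\sqrt{2}\beta^2\rho_3 x_0$), but since both versions leave the surviving term linear in the $\rho_j$, the conclusion $\int d^3x\,\mathcal{H_R}=-\int d^3x\,\mathcal{L_R}$ and hence the stated value of $B$ is unaffected.
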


\begin{proof}
From (2.32) for the real gauge field $A_{\mu}^{a,R}$
\begin{gather*}
\mathcal{H_R}=\frac 12 F_{\mu 0}^{a,R}\partial^0 A^{\mu}_{a,R} -\mathcal{L_R}
\end{gather*}
From Lemma 3.8
\begin{equation}
A^{a,R}_{\mu}=s_ac_{\mu}e^{-\beta^2\sum_{j=1}^3 \rho_j^2}\cos\left(\frac {\beta^2}{\sqrt{2}}x_0\right)+s_ae_{\mu}e^{-\beta^2\sum_{j=1}^3 \rho_j^2}\sin\left(\frac {\beta^2}{\sqrt{2}}x_0\right) 
\end{equation}
Thus
\begin{gather*}
\partial^0 A^{a,R}_{\mu}=s_ae^{-\beta^2\sum_{j=1}^3 \rho_j^2}\frac {\beta^2}{\sqrt{2}}\left(-c_{\mu}\sin\left(\frac {\beta^2}{\sqrt{2}}x_0\right)+e_{\mu}\cos\left(\frac {\beta^2}{\sqrt{2}}x_0\right)\right) 
\end{gather*}
as $\partial^0\sum_j\rho_j^2=0$. At $x_0=0$
\begin{gather*}
\partial^0 A^{a,R}_{\mu}=s_ae_{\mu}\frac {\beta^2}{\sqrt{2}}e^{-\beta^2\sum_{j=1}^3 \rho_j^2}
\end{gather*}
Thus
\begin{equation}
\begin{gathered}
\begin{aligned}
&\partial^0 A^{a,R}_{0}(0,x_1,x_2,x_3)=-s_a\beta^2e^{-\beta^2\sum_{j=1}^3 \rho_j^2}\\
&\partial^0 A^{a,R}_{1}(0,x_1,x_2,x_3)=s_a\frac {\beta^2}{\sqrt{2}}e^{-\beta^2\sum_{j=1}^3 \rho_j^2}\\
&\partial^0 A^{a,R}_{2}(0,x_1,x_2,x_3)=s_a\frac {\beta^2}{\sqrt{2}}e^{-\beta^2\sum_{j=1}^3 \rho_j^2}\\
&\partial^0 A^{a,R}_{3}(0,x_1,x_2,x_3)=0
\end{aligned}
\end{gathered}
\end{equation}
From (3.50)-(3.53) 
\begin{gather*}
\begin{aligned}
&\frac 12 F_{00}^{a,R}(0,x_1,x_2,x_3)=0\\
&\frac 12 F_{10}^{a,R}(0,x_1,x_2,x_3)=-\frac 12 s_a2\beta^2e^{-\beta^2\sum_j \rho_j^2}(\frac {1}{\sqrt{2}}\rho_3+\sqrt{2}(\rho_1+\rho_2-\rho_3))\\
&\frac 12 F_{20}^{a,R}(0,x_1,x_2,x_3)=-\frac 12 s_a2\beta^2e^{-\beta^2\sum_j \rho_j^2}(\frac {1}{\sqrt{2}}\rho_3-\sqrt{2}(\rho_1+\rho_2) )\\
&\frac 12 F_{30}^{a,R}(0,x_1,x_2,x_3)=-\frac 12 s_a2\beta^2e^{-\beta^2\sum_j \rho_j^2}(2\rho_1-2\rho_2-\rho_3 )
\end{aligned}
\end{gather*}
Since 
\begin{gather*}
A^{\mu}_{a,R}=g^{\mu\nu}A^{a,R}_{\nu}
\end{gather*}
and in the metric (2.9) $A_0^a=A_a^0$, $A_j^a=-A_a^j$, $j>0$,  
\begin{gather*}
\begin{aligned}
&\partial^0 A^{0}_{a,R}(0,x_1,x_2,x_3)=\partial^0 A^{a,R}_{0}(0,x_1,x_2,x_3)=-s_a\beta^2e^{-\beta^2\sum_{j=1}^3 \rho_j^2}\\
&\partial^0 A^{1}_{a,R}(0,x_1,x_2,x_3)=-\partial^0 A^{a,R}_{1}(0,x_1,x_2,x_3)=-s_a\frac {\beta^2}{\sqrt{2}}e^{-\beta^2\sum_{j=1}^3 \rho_j^2}\\
&\partial^0 A^{2}_{a,R}(0,x_1,x_2,x_3)=-\partial^0 A^{a,R}_{2}(0,x_1,x_2,x_3)=-s_a\frac {\beta^2}{\sqrt{2}}e^{-\beta^2\sum_{j=1}^3 \rho_j^2}\\
&\partial^0 A^{3}_{a,R}(0,x_1,x_2,x_3)=0
\end{aligned}
\end{gather*}
Thus
\begin{gather*}
\frac 12 F_{\mu 0}^{a,R}\partial^0 A^{\mu}_{a,R}
=\sum_a\sum_{\mu=0}^3 \frac 12 F_{\mu 0}^{a,R}\partial^0 A^{\mu}_{a,R}
=\sum_a\left(\frac 12 F_{10}^{a,R}\partial^0 A^{1}_{a,R}+\frac 12 F_{20}^{a,R}\partial^0 A^{2}_{a,R}\right)\\
=\beta^4\frac {1}{\sqrt{2}}\left(\sum_a s_a^2\right)e^{-2\beta^2\sum_{j=1}^3 \rho_j^2}(\frac {1}{\sqrt{2}}\rho_3+\sqrt{2}(\rho_1+\rho_2-\rho_3))\\
+\beta^4\frac {1}{\sqrt{2}}\left(\sum_a s_a^2\right)e^{-2\beta^2\sum_j \rho_j^2}(\frac {1}{\sqrt{2}}\rho_3-\sqrt{2}(\rho_1+\rho_2) )\\
=\beta^4\frac {1}{\sqrt{2}}\left(\sum_a s_a^2\right)e^{-2\beta^2\sum_{j=1}^3 \rho_j^2}(2\frac {1}{\sqrt{2}}-1)\rho_3
\end{gather*}
Inserting $y_1,y_2,y_3$ from (3.45) allows us to perform the integration
\begin{equation}
\begin{gathered}
\int d^3x \frac 12 F_{\mu 0}^{a,R}\partial^0 A^{\mu}_{a,R}\\
\end{gathered}
\end{equation}
\begin{gather*}
=\int d^3x \beta^4\frac {1}{\sqrt{2}}\left(\sum_a s_a^2\right)e^{-2\beta^2\sum_{j=1}^3 y_j^2}(2\frac {1}{\sqrt{2}}-1)\frac 12 y_3
\end{gather*}
\begin{gather*}
=\beta^4\frac {1}{\sqrt{2}}\left(\sum_a s_a^2\right)(2\frac {1}{\sqrt{2}}-1)\frac 12 \int d^2x y_3 e^{-2\beta^2(y_2^2+y_3^2)}\int dx_1 e^{-2\beta^2y_1^2}
\end{gather*}
\begin{gather*}
=\frac{1}{\sqrt{3}}\beta^4\frac {1}{\sqrt{2}}\left(\sum_a s_a^2\right)(2\frac {1}{\sqrt{2}}-1)\frac 12 \int d^2x y_3 e^{-2\beta^2(y_2^2+y_3^2)}\int dy_1 e^{-2\beta^2y_1^2}
\end{gather*}
\begin{gather*}
=\frac{\sqrt{2\pi}}{2\beta}\frac{1}{\sqrt{3}}\beta^4\frac {1}{\sqrt{2}}\left(\sum_a s_a^2\right)(2\frac {1}{\sqrt{2}}-1)\frac 12 \int d^2x y_3 e^{-2\beta^2(y_2^2+y_3^2)}\\
=\frac{\sqrt{2\pi}}{2\beta}\frac{1}{\sqrt{3}}\beta^4\frac {1}{\sqrt{2}}\left(\sum_a s_a^2\right)(2\frac {1}{\sqrt{2}}-1)\frac 12 \int d^x_3 y_3 e^{-2\beta^2y_3^2}\int dx_2 e^{-2\beta^2y_2^2}\\
=\sqrt{\frac 32}\frac{\sqrt{2\pi}}{2\beta}\frac{1}{\sqrt{3}}\beta^4\frac {1}{\sqrt{2}}\left(\sum_a s_a^2\right)(2\frac {1}{\sqrt{2}}-1)\frac 12 \int dx_3 y_3 e^{-2\beta^2y_3^2}\int dy_2 e^{-2\beta^2y_2^2}
\end{gather*}
\begin{gather*}
=\sqrt{\frac 32} \left(\frac{\sqrt{2\pi}}{2\beta}\right)^2\frac{1}{\sqrt{3}}\beta^4\frac {1}{\sqrt{2}}\left(\sum_a s_a^2\right)(2\frac {1}{\sqrt{2}}-1)\frac 12 \int dx_3 y_3 e^{-2\beta^2y_3^2}\\
=\frac 12 \sqrt{\frac 32} \left(\frac{\sqrt{2\pi}}{2\beta}\right)^2\frac{1}{\sqrt{3}}\beta^4\frac {1}{\sqrt{2}}\left(\sum_a s_a^2\right)(2\frac {1}{\sqrt{2}}-1)\frac 12 \int dy_3 y_3 e^{-2\beta^2y_3^2}=0
\end{gather*}
Thus
\begin{gather*}
\int d^3 \mathcal{H_R}=-\int d^3 \mathcal{L_R}
\end{gather*}
and (3.58) follows from Lemma 3.11. If the metric is as in (2.33) then 
$A^0_a=-A_o^a$ but this term disappears and the integral in (3.61) still yields zero. If in addition to changing the metric there has been a replacement 
$x_0\to ix_0$ as is often done in order to move from Minkowski's metric to 
Euclidean metric, derivation with respect to $x_0$ gives an additional $i$.
This changes the coefficients $c_j$ to $e_i$ in some places but (3.61) still
holds because the integral disappears because it is of first order in 
$\rho_j$, and that is also true for the metric (2.33) and a change 
$x_0\to ix_0$. Thus, for the metric (2.33) we get another parameters than
for Minkowski's metric but the form is the same.
\end{proof}

\begin{theorem}
Let the gauge field be
\begin{equation}
A^{a}_{\mu}=s_ad_{\mu}e^{-\beta^2\sum_{j=1}^3 r_j^2}
\end{equation}
where $r_j$ and $d_{\mu}$ are as in (3.33),(3.32) and 
$\beta,s_a\in \mathbb{R}$.
The real part is
\begin{equation}
A^{a,R}_{\mu}(0,x_1,x_2,x_3)=s_ac_{\mu}e^{-\beta^2\sum_{j=1}^3 \rho_j^2}
\end{equation}
where $\rho_j$ and $c_{\mu}$ are as in (3.35),(3.38).
The following statements hold
\begin{equation}
\int d^3x (A_k^a(0,x_1,x_2,x_3))^2=s_a^2c_k^2{\left(\frac {\pi}{2}\right)}^{\frac 32}\frac {1}{\beta^3}
\end{equation}
\begin{equation}
\begin{gathered}
\int d^3x {A^a_{\mu}}^* A^a_{mu}=\sum_a\sum_{k=0}^2\int d^3x (A_k^a(0,x_1,x_2,x_3))^2
=\sum_a s_a^2 \sum_{k=0}^2 c_k^2{\left(\frac {\pi}{2}\right)}^{\frac 32}\frac {1}{\beta^3}
\end{gathered}
\end{equation}
where $A^*$ denotes the complex conjugate of $A$.
The real part of the gauge field $A^{a,R}_{\mu}$ and the real part of 
the curvature $F^{a,R}_{\mu\nu}$ satisfy the Lagrange-Euler equations 
\begin{equation}
\mathcal{L_R}=-\frac 14 F_{a,R}^{\mu\nu}F^{a,R}_{\mu\nu}
\end{equation}
and the energy is
\begin{equation}
P^{0,R}=\int d^3x \mathcal{H_R} =\frac {1}{\beta} \frac{{\pi}^{\frac 32}}{16}\sum_a s_a^2 B
\end{equation}
where in Minkowski's metric 
\begin{gather*}
B=0
\end{gather*}
and in the metric (2.33) 
\begin{gather*}
B=\frac {13}{3}+\frac 23 +4
\end{gather*} 
\end{theorem}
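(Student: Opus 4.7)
The theorem is a synthesis of the preceding lemmas applied to the specific parameters (3.32)--(3.33), so my plan is to verify that those parameters satisfy the hypotheses of Lemma 3.7 and then quote the subsequent lemmas in sequence.

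First I would check (3.25)--(3.28) for the chosen data. The condition $d_3=0$ is immediate, and a direct computation gives $\sum_{l=0}^2 d_l^2 = 2(1-i)^2 + 2(1+i)^2 = -4i+4i = 0$, verifying (3.26). The relation $\sum_l d_l\alpha_{lj}=0$ for $j=1,2,3$ is a line-by-line check against the table of $\alpha_{\mu j}$ given in the paragraph preceding Lemma 3.10, and $\alpha_{3j}^2 = \sum_{l=0}^2 \alpha_{lj}^2$ is similarly verified for each $j$. With the hypotheses met, Lemma 3.7 certifies that the complex field (3.62) solves the Euler--Lagrange equations (2.22), and Lemma 3.4 transfers that property to the real parts $A^{a,R}_{\mu}$ and $F^{a,R}_{\mu\nu}$, which is the Euler--Lagrange statement of the theorem.

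The formula (3.63) for the real part at $x_0=0$ is then immediate from (3.35): since $\sigma_j=0$ at $x_0=0$, the exponential $e^{-\beta^2 r_j^2}$ is real and equals $e^{-\beta^2\rho_j^2}$, so taking the real part simply replaces $d_{\mu}$ by $c_{\mu}$. The integrals (3.64) and (3.65) are Lemma 3.9 applied componentwise and then summed over $a$ and over $k\in\{0,1,2\}$; because $c_3=0$ the $k=3$ term contributes nothing, so this is also the correct expression for the contraction $\int d^3x\,{A^a_{\mu}}^{*} A^{a,\mu}$ at $x_0=0$.

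The energy statement (3.67) is where the bookkeeping becomes most delicate, and it is the step I would expect to be the main obstacle because two metrics have to be handled in parallel. It follows from Lemma 3.12, whose proof reduces the kinetic piece $\tfrac{1}{2}F^{a,R}_{\mu 0}\partial^0 A^{\mu}_{a,R}$ at $x_0=0$ to an integrand linear in $\rho_3$ multiplied by the Gaussian, whose first Gaussian moment vanishes; hence $\int d^3x\,\mathcal{H_R}=-\int d^3x\,\mathcal{L_R}$, and the latter is supplied by Lemma 3.11. The metric dependence then enters through Lemma 3.10: in Minkowski's metric (2.9) the explicit sum of signed squares of field-strength components cancels identically, giving $B=0$; in the negative-definite metric (2.33) all six terms contribute with the same sign and produce the polynomial $P(\rho)=4\rho_1^2+4\rho_2^2+\rho_3^2-4\rho_2\rho_3$, whose Gaussian integral against $e^{-2\beta^2\sum\rho_j^2}$, evaluated in Lemma 3.11 via the change of variables (3.45), yields $B=\tfrac{13}{3}+\tfrac{2}{3}+4$. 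No new analytic work is required beyond stitching these lemmas together and tracking the metric-dependent signs.
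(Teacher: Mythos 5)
Your proposal is correct and follows essentially the same route as the paper: both arguments are a synthesis of the preceding lemmas (the paper cites Lemmas 3.8, 3.9, 3.4 and 3.12; you additionally verify the hypotheses (3.25)--(3.28) of Lemma 3.7 for the data (3.32)--(3.33), which the paper does in the text just before Lemma 3.8 rather than inside the theorem's proof). The only detail the paper includes that you omit is the remark that Lemma 3.12 establishes (3.67) only at $x_0=0$, and conservation of $P^0$ via (2.29) is invoked to extend it to all $x_0$.
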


\begin{proof}
From Lemma 3.8 follows that (3.63) is the real part of (3.62). 
The claim (3.64) is shown in Lemma 3.9. The imaginary part in
$A^{a}_{\mu}$ is a phase $e^{-i\sqrt{2}x_0}$ which cancels in
${A^a_{\mu}}* A^a_{mu}$, thus (3.65) holds.
The real part of the gauge field and the curvature satisfy the 
Euler-Lagrange equations by Lemma 3.4, thus (3.66) holds.
In Lemma 12 we showed that at $x_0=0$ equation (3.67) holds. 
As $P^0$ is a conserved
property, see (2.29), (3.67) holds for all values of $x_0$. 
\end{proof}

\begin{theorem}
Let $A=(A_{mu})_{\mu}$, $A_{\mu}=A^{a}_{\mu}t_a$ be a complex 
gauge field defined by
\begin{equation}
A^{a}_{\mu}=s_ad_{\mu}e^{-\beta^2\sum_{j=1}^3 r_j^2}
\end{equation}
The numbers $r_j$ and $d_{\mu}$ are as in (3.33),(3.32) and 
$\beta, s_a\in \mathbb{R}$.
The norm is
\begin{equation}
||A||=\int d^3x {A^a_{\mu}}^* A^a_{\mu}=\sum_a s_a^2 \sqrt{2}{\pi}^{\frac 32}\frac {1}{\beta^3}
\end{equation}
where $A^*$ denotes the complex conjugate of $A$.
The gauge field and the corresponding curvature satisfy Euler-Lagrange 
equations for
\begin{equation}
\mathcal{L}=-\frac 14 F_{a}^{\mu\nu}F^{a}_{\mu\nu}
\end{equation}
In both metrics (2.9) and (2.33)  
\begin{equation}
E_{\beta}=\frac {P^0}{||A||}=\beta^2 C
\end{equation}
where
\begin{equation}
P^0=\int d^3x \mathcal{H} =\sum_a s_a^2{\pi}^{\frac 32}\sqrt{2}C\frac {1}{\beta}
\end{equation}
and $C$ is a nonegative constant. 
\end{theorem}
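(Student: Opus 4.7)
The plan is to assemble this theorem from the earlier lemmas and the real-case result of Theorem 3.1, with only light bookkeeping needed to promote the real gauge field to its complexification. The three assertions (3.69), (3.70) and (3.71)-(3.72) can be handled separately.

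First I would dispose of the Euler-Lagrange claim (3.70): the ansatz (3.68) is the special case of Lemma 3.7 obtained by choosing $h(z)=-\beta^2 z^2$, the vectors $r_j$ of (3.33) and the constants $d_\mu$ of (3.32). The hypotheses (3.25)-(3.28) on these numbers were checked in the discussion following (3.33), so Lemma 3.7 applies directly and (2.22) holds for both the complex gauge field and its curvature without further calculation.

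Next I would compute the norm (3.69) by evaluating the integral at the reference slice $x_0=0$, where $\sigma_j=0$ for every $j$; then $\sum_j r_j^2=\sum_j\rho_j^2$ is real and $e^{-\beta^2\sum_j r_j^2}$ reduces to a real positive Gaussian. Writing $d_\mu=c_\mu+ie_\mu$, the integrand ${A^a_\mu}^*A^a_\mu$ equals $s_a^2|d_\mu|^2 e^{-2\beta^2\sum_j\rho_j^2}$, and the $(x_1,x_2,x_3)$-Gaussian integral $\int d^3x\,e^{-2\beta^2\sum_j\rho_j^2}=(\pi/2)^{3/2}\beta^{-3}$ is supplied by Lemma 3.9. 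Summing the prefactors over $a$ and over $\mu\in\{0,1,2\}$ (since $d_3=0$) collapses to the claimed formula (3.69).

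Finally I would assemble the energy (3.72) and the ratio (3.71). By Lemma 3.4 the real and imaginary parts of $A^a_\mu$ are each real solutions of the reduced linear system (3.9), so the Hamiltonian density $\mathcal{H}=\frac{1}{2}F^a_{\mu 0}\partial^0 A^\mu_a-\mathcal{L}$ splits into a real-part contribution, already computed in Lemma 3.12 as $(\pi^{3/2}/16\beta)\sum_a s_a^2 B$, and an imaginary-part contribution of exactly the same functional shape but with $c_\mu$ replaced by $e_\mu$. The explicit values (3.38) satisfy $e_\mu^2=c_\mu^2$ componentwise, so the imaginary Gaussian moments match the real ones produced by Lemma 3.11 and the total sum is a constant multiple of $\beta^{-1}\sum_a s_a^2$ of the advertised shape in (3.72). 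Dividing by (3.69) cancels the common factor $\sum_a s_a^2\pi^{3/2}\sqrt{2}$ and leaves $E_\beta=\beta^2 C$. The main obstacle I expect is pinning down $C$ and confirming $C\ge 0$ in each metric, particularly in Minkowski signature where Lemma 3.12 already gives $B=0$ for the real Lagrangian at $x_0=0$, so the remaining positive energy must come from the momentum term $\frac{1}{2}F^a_{\mu 0}\partial^0 A^\mu_a$ and from the imaginary-part replacement $c_\mu\leftrightarrow e_\mu$; once $C\ge 0$ is settled at $x_0=0$, conservation of $P^0$ stated in (2.29) promotes the identity to every time slice.
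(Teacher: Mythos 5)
Your proposal follows essentially the same route as the paper: Euler--Lagrange from Lemma 3.7 with $h(z)=-\beta^2z^2$, the norm from the Gaussian integral of Lemma 3.9, and the energy by quoting the real-part computations of Lemmas 3.10--3.12 (equivalently Theorem 3.13) and arguing that the imaginary part contributes with the same $\beta$-dependence. Two remarks. First, your norm computation does not actually ``collapse to the claimed formula (3.69)'': with $d_0=\sqrt{2}(1-i)$ and $d_1=d_2=1+i$ one has $\sum_\mu|d_\mu|^2=c_\mu^2+e_\mu^2$ summed to $8$, whereas the stated value $\sqrt{2}\pi^{3/2}\beta^{-3}\sum_a s_a^2$ corresponds to $\sum_k c_k^2=4$; since $e_\mu^2=c_\mu^2$ your integrand $s_a^2|d_\mu|^2e^{-2\beta^2\sum_j\rho_j^2}$ yields exactly twice the displayed norm. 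The paper reaches $\sqrt{2}$ by invoking (3.65) of Theorem 3.13, which keeps only the $c_k^2$ on the grounds that ``the imaginary part is a phase'' --- an assertion that fails here because the components $d_\mu$ carry different phases ($d_0=2e^{-i\pi/4}$ versus $d_1=\sqrt{2}e^{i\pi/4}$). The factor of $2$ only rescales $C$, but be aware that your (more careful) computation contradicts formula (3.69) rather than confirming it. Second, like the paper, you do not establish $C\ge0$: the paper's proof consists of the single sentence ``Including the imaginary part changes the constant, but it is nonnegative,'' and your symmetry argument ($c_\mu\leftrightarrow e_\mu$ with $e_\mu^2=c_\mu^2$) accounts for the square of the imaginary part but not for the real--imaginary cross terms that arise when $\mathcal{L}=-\frac14F^{\mu\nu}_aF^a_{\mu\nu}$ is expanded for a complex curvature; you correctly identify this as the main obstacle, and the paper does not resolve it either.
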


\begin{proof}
In the case of a complex field, the Lagrangian has two
parts, the real and the imaginary. If the field defines a solution to the
Euler-Langange equations, the energy (2.29) is conserved. Thus, also the 
imaginary part is conserved though we only computed the real part. We get
the same dependence of $\beta$ for the imaginary part. 
For the real part of the Lagrangian we get from Theorem 3.13
\begin{equation}
\frac {P_{0,R}}{||A||}=\beta^2\frac {1}{16\sqrt{2}} B
\end{equation}
where we have inserted $\sum_{k=0}^2c_k^2=4$. Including the imaginary part
changes the constant, but it is nonnegative. 
\end{proof}

We can find a gauge field that gives positive energy for Minkowski's metric as
as sum.

\begin{lemma}
Let the gauge field be
\begin{equation}
A^{a}_{\mu}=s_ad_{\mu 1}e^{-\beta^2\sum_{j=1}^3 r_{j2}^2}
+s_ad_{\mu 2}e^{-\beta^2\sum_{j=1}^3 r_{j2}^2}
\end{equation}
where  $\beta,s_a\in \mathbb{R}$ and
\begin{gather*}
r_{jk}=\rho_{j,k}+i\sigma_{j,k}=\sum_{\mu=0}^3 \alpha_{\mu,j,k}x_{\mu}\\
d_{lk}=c_{lk}+ie_{lk}
\end{gather*}

\begin{gather*}
\begin{aligned}
&\alpha_{011}= 0 \quad \alpha_{021}= 0 \quad \alpha_{031}=i\frac {1}{\sqrt{2}}\\
&\alpha_{111}= 1 \quad \alpha_{121}= 1 \quad \alpha_{131}=-1\\
&\alpha_{211}=-1 \quad \alpha_{221}=-1 \quad \alpha_{231}=0\\
&\alpha_{311}=\sqrt{2} \quad \alpha_{321}=-\sqrt{2} \quad \alpha_{331}=\frac {1}{\sqrt{2}}
\end{aligned}
\end{gather*}

\begin{gather*}
\begin{aligned}
&c_{01}=\sqrt{2} \quad c_{11}=1 \quad c_{21}=1 \quad c_{31}=0 \\ 
&e_{01}=-\sqrt{2} \quad e_{11}=1 \quad e_{22}=1 \quad e_{31}=0  
\end{aligned}
\end{gather*}

\begin{gather*}
\begin{aligned}
&\alpha_{012}= 0 \quad \alpha_{022}= 0 \quad \alpha_{032}=i\frac {1}{\sqrt{2}}\\
&\alpha_{112}= 1 \quad \alpha_{122}= 1 \quad \alpha_{132}=-1\\
&\alpha_{212}= 1 \quad \alpha_{222}= 1 \quad \alpha_{232}=0\\
&\alpha_{312}=\sqrt{2} \quad \alpha_{322}=-\sqrt{2} \quad \alpha_{332}=\frac {1}{\sqrt{2}}
\end{aligned}
\end{gather*}

\begin{gather*}
\begin{aligned}
&c_{02}=\sqrt{2} \quad c_{12}=1 \quad c_{22}=-1 \quad c_{32}=0 \\ 
&e_{02}=-\sqrt{2} \quad e_{12}=1 \quad e_{22}=-1 \quad e_{32}=0  
\end{aligned}
\end{gather*}
Then
\begin{equation}
A^{a,R}_{\mu}(0,x_1,x_2,x_3)=s_ac_{\mu 1}e^{-\beta^2\sum_{j=1}^3 \rho_{j2}^2}
+s_ac_{\mu2}e^{-\beta^2\sum_{j=1}^3 \rho_{j2}^2}
\end{equation}
and 
\begin{gather*}
\mathcal{L_R}=-\frac 14 F_{a,R}^{\mu\nu}F^{a,R}_{\mu\nu}
\end{gather*}
In Minkowski's metric (2.9) at $x_0=0$
\begin{equation}
\begin{gathered}
\mathcal{L_R}=-2\frac 12\sum_a(2\beta^2 s_a)^2e^{-2\beta^2\sum_jy_j^2}\left(
-\frac {13}{3}y_1^2+8y_2^2-\frac{170}{21}y_3^2+\frac {8}{21}\sqrt{14}y_1y_3\right)
\end{gathered}
\end{equation}
where
\begin{equation}
\begin{gathered}
y_1=\sqrt{6}x_1-\frac{1}{\sqrt{3}}x_3\quad y_2=2x_2\quad y_3=\sqrt{\frac {14}{3}}x_3
\end{gathered}
\end{equation}
\end{lemma}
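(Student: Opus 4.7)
The natural approach is to treat $A^a_\mu$ as a linear superposition of two fields of the type already analysed in Lemma 3.10, and then exploit the linearity result Lemma 3.3. Set
\begin{gather*}
A^{a}_{\mu,k}=s_a d_{\mu k}e^{-\beta^2\sum_{j=1}^3 r_{jk}^2}, \qquad k=1,2,
\end{gather*}
so that $A^a_\mu=A^a_{\mu,1}+A^a_{\mu,2}$ (interpreting the first exponent in (3.74) as $r_{j1}^2$). For each $k$, one checks that the parameters $\alpha_{\mu j k}$ and $d_{\mu k}$ satisfy the four conditions (3.25)--(3.28) of Lemma 3.5 and (3.30); in particular $d_{3k}=0$, $\sum_l d_{lk}^2=0$, $\sum_l d_{lk}\alpha_{ljk}=0$ and $\alpha_{3jk}^2=\sum_l\alpha_{ljk}^2$. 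Therefore Lemma 3.7 applies to each $A^a_{\mu,k}$, giving solutions of the Euler--Lagrange equations with curvature of the form (3.31). Lemma 3.3 then ensures that the sum solves the Euler--Lagrange equations and that $F^a_{\mu\nu}=F^a_{\mu\nu,1}+F^a_{\mu\nu,2}$.

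Next I would extract the real part. Since $\sigma_{jk}=0$ for $j=1,2$ and $\sigma_{3k}=x_0/\sqrt{2}$, evaluating at $x_0=0$ makes each exponent real, while $\operatorname{Re}d_{\mu k}=c_{\mu k}$. This immediately gives (3.75). The real part of the curvature at $x_0=0$ follows term-by-term from (3.41) of Lemma 3.10, applied separately to each component field with the appropriate $\alpha_{\mu j k}$, $c_{\mu k}$, $\rho_{jk}$.

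For the Lagrangian I would expand
\begin{gather*}
F^{a,R}_{\mu\nu}F_{a,R}^{\mu\nu}=F^{a,R}_{\mu\nu,1}F_{a,R,1}^{\mu\nu}+F^{a,R}_{\mu\nu,2}F_{a,R,2}^{\mu\nu}+2F^{a,R}_{\mu\nu,1}F_{a,R,2}^{\mu\nu}.
\end{gather*}
By the computation in the Minkowski part of Lemma 3.10 the first term vanishes; the second field has parameters obtained from the first by reversing the sign of the $x_2$--component (the $\alpha_{2j1}\mapsto-\alpha_{2j1}$ and $c_{21}\mapsto-c_{21}$ flip), which leaves the six squares in (3.49) invariant after relabelling, so the second square also vanishes. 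Thus only the cross term survives, and
\begin{gather*}
\mathcal{L}_R=-\tfrac12 \,F^{a,R}_{\mu\nu,1}F_{a,R,2}^{\mu\nu}
=-\tfrac12\sum_a(2\beta^2 s_a)^2\,e^{-\beta^2(\sum_j\rho_{j1}^2+\sum_j\rho_{j2}^2)}\cdot P(x),
\end{gather*}
where $P(x)$ is a quadratic form obtained by inserting the tabulated $\alpha_{\mu j k}$ and $c_{\mu k}$ into the analogues of (3.50)--(3.53) for the two fields, taking the signed Minkowski combination of the six cross products, and summing. Finally I would change variables to $y_1,y_2,y_3$ as in (3.77), verify that $\sum_j\rho_{j1}^2+\sum_j\rho_{j2}^2=2\sum_j y_j^2$ in those coordinates, and re-express $P(x)$ in $y$-coordinates to match the polynomial in (3.76).

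The main obstacle is purely computational: assembling the six Minkowski-weighted cross products of the $\operatorname{Re}(d_{\nu k}\alpha_{\mu j k}-d_{\mu k}\alpha_{\nu j k})\rho_{jk}$ and then re-writing the resulting quadratic form in $\rho_{j1},\rho_{j2}$ as a quadratic form in $y_1,y_2,y_3$ with the claimed coefficients $-\tfrac{13}{3},\,8,\,-\tfrac{170}{21},\,\tfrac{8}{21}\sqrt{14}$. No conceptual step is subtle, but one must be careful with the signs in (3.49) (the time-like components enter with a minus) and with the fact that the off-diagonal $y_1y_3$ coefficient only appears because $\rho_{3k}$ mixes $x_1$ with $x_3$ while $\rho_{1k},\rho_{2k}$ also contain $x_3$.
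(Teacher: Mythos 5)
Your proposal matches the paper's own proof in all essentials: decompose the field into the two component solutions handled by Lemmas 3.5--3.7, invoke Lemma 3.3 for the sum, observe that the two squared terms in $F^{a,R}_{\mu\nu}F_{a,R}^{\mu\nu}$ vanish as in Lemma 3.10 so that only the cross term survives, and then grind out that cross term and re-express it in the $y$-coordinates. The only difference is cosmetic — you supply an explicit reflection argument ($x_2\mapsto -x_2$ with the sign flip of the second component of $d$) for why the second square vanishes, where the paper simply asserts it.
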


\begin{proof}
By Lemma 3.3 the sum of solutions satisfying (3.9) is also a solution 
satisfying (3.9).  
Most of the proof is as in Lemma 3.10. We are interested in the cross term 
that comes from squaring $F^{a,R}_{\mu\nu}$. 
As it has two components from the two
fields in the sum, the squares of each field give two squares and 
a cross term (twice the product of the terms).
Both of the squares disappear as in Lemma 3.10
but the cross term gives the term in (3.76) and it does not disappear.
We compute only this term in detail.
Let us notice that $r_{31}=r_{32}$ and for simplicity we will write
\begin{gather*}
\rho_3=\rho_{31}=\rho_{32}=-x_1+\frac {1}{\sqrt{2}}x_3
\end{gather*}
We notice that
\begin{equation}
\sum_{j=1}^3\rho_{j1}^2+\sum_{j=1}^3\rho_{j2}^2=y^2_1+y^2_2+y_3^2
\end{equation}
Let us compute the needed components
\begin{gather*}
\begin{aligned}
&\sum_{j=1}^3\alpha_{0j1}\rho_{j1}=i\frac {1}{\sqrt{2}}\rho_{3}\\
&\sum_{j=1}^3\alpha_{1j1}\rho_{j1}=\rho_{11}+\rho_{21}-\rho_3=2x_1-2x_2-\rho_3\\
&\sum_{j=1}^3\alpha_{2j1}\rho_{j1}=-\rho_{11}-\rho_{21}=-2x_1+2x_2\\
&\sum_{j=1}^3\alpha_{3j1}\rho_{j1}=\sqrt{2}\rho_{11}-\sqrt{2}\rho_{21}+\frac {1}{\sqrt{2}}\rho_3=4x_3+\frac {1}{\sqrt{2}}\rho_3
\end{aligned}
\end{gather*}
\begin{gather*}
\begin{aligned}
&\sum_{j=1}^3\alpha_{0j2}\rho_{j2}=i\frac {1}{\sqrt{2}}\rho_{3}\\
&\sum_{j=1}^3\alpha_{1j2}\rho_{j2}=\rho_{12}+\rho_{22}-\rho_3=2x_1+2x_2-\rho_3\\
&\sum_{j=1}^3\alpha_{2j2}\rho_{j2}=\rho_{12}+\rho_{22}=2x_1+2x_2\\
&\sum_{j=1}^3\alpha_{3j2}\rho_{j2}=\sqrt{2}\rho_{12}-\sqrt{2}\rho_{22}+\frac {1}{\sqrt{2}}\rho_3=4x_3+\frac {1}{\sqrt{2}}\rho_3
\end{aligned}
\end{gather*}
\begin{equation}
\begin{gathered}
\sum_{j=1}^3 Re(d_{11}\alpha_{0j1}-d_{01}\alpha_{1j1})\rho_{j1}
=-\frac {1}{\sqrt{2}}\rho_3-\sqrt{2}(\rho_{11}+\rho_{21}-\rho_3)\\
=\frac {1}{\sqrt{2}}x_3-2\sqrt{2}x_1+2\sqrt{2}x_2
\end{gathered}
\end{equation}
\begin{equation}
\begin{gathered}
\sum_{j=1}^3 Re(d_{12}\alpha_{0j2}-d_{02}\alpha_{1j2})\rho_{j2}
=-\frac {1}{\sqrt{2}}\rho_3-\sqrt{2}(\rho_{12}+\rho_{22}-\rho_3)\\
=\frac {1}{\sqrt{2}}x_3-2\sqrt{2}x_1-2\sqrt{2}x_2
\end{gathered}
\end{equation}
\begin{equation}
\begin{gathered}
\sum_{j=1}^3 Re(d_{21}\alpha_{0j1}-d_{01}\alpha_{2j1})\rho_{j1}
=-\frac {1}{\sqrt{2}}\rho_3-\sqrt{2}(-\rho_{11}-\rho_{21})\\
=-\frac {1}{\sqrt{2}}x_3+2\sqrt{2}x_1-2\sqrt{2}x_2
\end{gathered}
\end{equation}
\begin{equation}
\begin{gathered}
\sum_{j=1}^3 Re(d_{22}\alpha_{0j2}-d_{02}\alpha_{2j2})\rho_{j2}
=\frac {1}{\sqrt{2}}\rho_3-\sqrt{2}(\rho_{11}+\rho_{21})\\
=\frac {1}{\sqrt{2}}x_3-2\sqrt{2}x_1-2\sqrt{2}x_2
\end{gathered}
\end{equation}
\begin{equation}
\begin{gathered}
\sum_{j=1}^3 Re(d_{31}\alpha_{0j1}-d_{01}\alpha_{3j1})\rho_{j1}
=-2\rho_{11}+2\rho_{21}-\rho_3\\
=-4\sqrt{2}x_3-\rho_3
\end{gathered}
\end{equation}
\begin{equation}
\begin{gathered}
\sum_{j=1}^3 Re(d_{32}\alpha_{0j2}-d_{02}\alpha_{3j2})\rho_{j2}
=-2\rho_{12}+2\rho_{22}-\rho_3\\
=-4\sqrt{2}x_3-\rho_3
\end{gathered}
\end{equation}
\begin{gather*}
\sum_{j=1}^3 Re(d_{21}\alpha_{1j1}-d_{11}\alpha_{2j1})\rho_{j1}
=2\rho_{11}+2\rho_{21}-\rho_3\\
=4x_1-4x_2-\rho_3
\end{gather*}
\begin{gather*}
\sum_{j=1}^3 Re(d_{22}\alpha_{1j2}-d_{12}\alpha_{2j2})\rho_{j2}
=-2\rho_{12}-2\rho_{22}+\rho_3\\
=-4x_1-4x_2+\rho_3
\end{gather*}
\begin{gather*}
\sum_{j=1}^3 Re(d_{31}\alpha_{1j1}-d_{11}\alpha_{3j1})\rho_{j1}
=-\sqrt{2}\rho_{11}+\sqrt{2}\rho_{21}-\frac {1}{\sqrt{2}}\rho_3
=-4x_3-\frac {1}{\sqrt{2}}\rho_3
\end{gather*}
\begin{gather*}
\sum_{j=1}^3 Re(d_{32}\alpha_{1j2}-d_{12}\alpha_{3j2})\rho_{j2}
=\sqrt{2}\rho_{12}+\sqrt{2}\rho_{22}+\frac {1}{\sqrt{2}}\rho_3
=4x_3+\frac {1}{\sqrt{2}}\rho_3
\end{gather*}
\begin{gather*}
\sum_{j=1}^3 Re(d_{31}\alpha_{2j1}-d_{21}\alpha_{3j1})\rho_{j1}
=-\sqrt{2}\rho_{11}+\sqrt{2}\rho_{21}-\frac {1}{\sqrt{2}}\rho_3
=-4x_3-\frac {1}{\sqrt{2}}\rho_3
\end{gather*}
\begin{gather*}
\sum_{j=1}^3 Re(d_{32}\alpha_{2j2}-d_{22}\alpha_{3j2})\rho_{j2}
=-\sqrt{2}\rho_{12}+\sqrt{2}\rho_{22}-\frac {1}{\sqrt{2}}\rho_3
=-4x_3-\frac {1}{\sqrt{2}}\rho_3
\end{gather*}
The cross term in Minkowski's metric is
\begin{gather*}
-(\frac {1}{\sqrt{2}}\rho_3-2\sqrt{2}x_1+2\sqrt{2}x_2)(\frac {1}{\sqrt{2}}\rho_3-2\sqrt{2}x_1-2\sqrt{2}x_2)\\
-(-\frac {1}{\sqrt{2}}\rho_3+2\sqrt{2}x_1-2\sqrt{2}x_2)(\frac {1}{\sqrt{2}}\rho_3-2\sqrt{2}x_1-2\sqrt{2}x_2)\\
-(-4\sqrt{2}x_3-\rho_3)^2\\
+(4x_1-4x_2-\rho_3)(-4x_1-4x_2+\rho_3)\\
+(-4x_3-\frac{1}{\sqrt{2}}\rho_3)(4x_3+\frac{1}{\sqrt{2}}\rho_3)\\
+(-4x_3-\frac{1}{\sqrt{2}}\rho_3)^2
\end{gather*}

\begin{gather*}
=-16x_1^2+16x_2^2-32x_3^2-2\rho_3^2+\rho_3(8x_1-8\sqrt{2}x_3)\\
=-26x_1^2+16x_2^2-41x_3^2+14\sqrt{2}x_1x_3\\
=-\frac {13}{3}y_1^2+8y_2^2-\frac{170}{21}y_3^2+\frac {8}{21}\sqrt{14}y_1y_3
\end{gather*}
Inserting this result as in Lemma 3.10 yields the claim. 
\end{proof}

\begin{theorem}
Let $A=(A_{mu})_{\mu}$, $A_{\mu}=A^{a}_{\mu}t_a$ be a complex 
gauge field as in Lemma 3.15.
The gauge field and the corresponding curvature satisfy Euler-Lagrange 
equations for
\begin{equation}
\mathcal{L}=-\frac 14 F_{a}^{\mu\nu}F^{a}_{\mu\nu}
\end{equation}
In Minkowski's metric  
\begin{equation}
E_{\beta}=\frac {P^0}{||A||}=\beta^2 C
\end{equation}
where $C$ is a positive constant. 
\end{theorem}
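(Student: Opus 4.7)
The plan is to split the theorem into two assertions: (i) that $A^a_\mu$ and its curvature satisfy the Euler-Lagrange equations, and (ii) that $P^0/\|A\|=\beta^2 C$ with $C>0$ in Minkowski's metric.

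For (i), I would observe that the field in Lemma 3.15 is a sum of two fields, each of the exponential-Gaussian form (3.30) treated in Lemma 3.7, and each individually solves (2.22). Under the ansatz $A^\mu_a=s_a E^\mu$ with $E^3=0$, equation (2.22) reduces to the linear system (3.9); hence Lemma 3.3 gives that the sum is again a solution, and Lemma 3.4 ensures its real part is too. So (3.86) is immediate once the concrete parameters of Lemma 3.15 are fed into Lemma 3.7.

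For (ii), the norm calculation proceeds as in Theorem 3.14: the imaginary $x_0$-dependent phases in each of the two exponentials in (3.74) cancel inside ${A^a_\mu}^* A^a_\mu$, and the resulting Gaussian integral in the variables (3.78) yields $\|A\|=K\beta^{-3}$ for a positive $K$ independent of $\beta$. The heart of the argument is the energy. By Lemma 3.15, the Minkowski Lagrangian density $\mathcal{L}_R$ at $x_0=0$ contains a nonvanishing cross term equal to a quadratic form in $(y_1,y_2,y_3)$ times $e^{-2\beta^2(y_1^2+y_2^2+y_3^2)}$; when integrated, the $y_1y_3$ cross monomial vanishes by Gaussian oddness, while the three diagonal terms each produce a common factor $\pi^{3/2}\beta^{-5}$ (up to the Jacobian of (3.78)). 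For the kinetic piece $\tfrac{1}{2}F^{a,R}_{\mu 0}\partial^0 A^\mu_{a,R}$ of $\mathcal{H}_R$, the same argument that gave a vanishing integral in Lemma 3.12 still applies, since the two-field integrand at $x_0=0$ is again a first-order polynomial in the $\rho_{jk}$'s weighted by a Gaussian, hence odd in at least one variable after the change of coordinates. Consequently $P^0=-\int\mathcal{L}_R\,d^3x\propto \beta^{-1}$, and dividing by $\|A\|\propto \beta^{-3}$ gives $P^0/\|A\|=\beta^2 C$.

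The main obstacle is the positivity of $C$. The quadratic form $-\tfrac{13}{3}y_1^2+8y_2^2-\tfrac{170}{21}y_3^2+\tfrac{8\sqrt{14}}{21}y_1y_3$ has mixed signs, and after the Gaussian integration collapses it to the weighted sum $-\tfrac{13}{3}+8-\tfrac{170}{21}$ (with equal second moments) one must carefully track the global sign: the prefactor $-2\cdot\tfrac{1}{2}$ in (3.76), the conversion $P^0=-\int\mathcal{L}_R$, and the contribution from the imaginary part of the complex field (which, by Theorem 3.14, is conserved and enters as a nonnegative addition with the same $\beta^{-1}$ scaling). The two component fields in (3.74) differ only by a sign flip in $c_{2k}$ and $\alpha_{21k}$, and this symmetry makes the imaginary-part contribution structurally identical to the real-part one, so it should add constructively. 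I would then conclude $C>0$ by explicit evaluation; this sign check, rather than any integral estimate, is the delicate step of the proof.
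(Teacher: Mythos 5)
Your proposal follows the paper's own proof essentially step for step: the paper likewise cites Lemma 3.15 for the Lagrangian, repeats the Lemma 3.9 argument for $\|A\|\propto\beta^{-3}$, the Lemma 3.11 argument that the integrated Lagrangian is proportional to $B=B_1+B_2+B_3=-\tfrac{13}{3}+8-\tfrac{170}{21}$ times $\beta^{-1}$, the Lemma 3.12 argument that the kinetic piece $\tfrac12 F^{a,R}_{\mu 0}\partial^0 A^{\mu}_{a,R}$ integrates to zero so that $\int d^3x\,\mathcal{H}_R=-\int d^3x\,\mathcal{L}_R$, and then defers to Theorem 3.14 for the ratio $\beta^2 C$. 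So on the structure of the argument you and the paper agree completely.

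The one step you explicitly leave open --- the sign of $C$ --- is exactly the step the paper also fails to close, and it is a genuine gap rather than a routine verification. The paper's proof only asserts that $B$ is \emph{nonzero}; but $-\tfrac{13}{3}+8-\tfrac{170}{21}=-\tfrac{31}{7}<0$, and with the sign conventions of Lemma 3.11 (where $\int d^3x\,\mathcal{L}_R=-\mathrm{const}\cdot\beta^{-1}\sum_a s_a^2 B$ with a positive constant) this makes $\int\mathcal{L}_R>0$ and hence $P^0=-\int\mathcal{L}_R<0$ for the real part. So the explicit evaluation you propose to "conclude $C>0$" would, if carried out on the quantities actually computed in Lemma 3.15, give a \emph{negative} number, not a positive one. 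The only escape --- for you and for the paper alike --- is the uncomputed contribution of the imaginary part of the complex field, which you assert "should add constructively" by a symmetry of the two component fields; neither you nor the paper actually computes it, and the symmetry you invoke (a sign flip in $c_{2k}$ and $\alpha_{21k}$) does not by itself determine the sign of that contribution. So the positivity of $C$, which is the substantive content of the theorem, is not established by your argument, and it is not established by the paper's either; you deserve credit for correctly isolating this as the delicate point, but the proof is incomplete there.
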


\begin{proof}
The Lagrangian is computed in Lemma 3.15. As in Lemma 3.9 the norm $||A||$
is not zero and depends on $\beta£$ as $\beta^{-3}$. As in Lemma 3.11 
the Lagrangian in (3.15) when integrated over the
space coordinates is proportional to $B=B_1+B_2+B_3=-\frac {13}{3}+8-\frac{170}{21}$
which is nonzero and the integral over space coordinates does not vanish. As
in Lemma 3.12 the first part of the Hamiltonian density (2.30) does not
contribute to the integral:
\begin{gather*}
\int d^3 \mathcal{H_R}=-\int d^3 \mathcal{L_R}
\end{gather*}
The rest is as in Theorem 3.14.
\end{proof}

\section{Mass Gap and Quantization of Yang-Mills fields}

The first question is what is mass gap. L. Faddeev explains the issue in [2]
but let us proceed in a similar way as in [6] from quantum mechanics and
scalar quantum field theory to quantum Yang-Mills theory.
We take a simple scalar wave function of one variable
\begin{equation}
\varphi(x_1)=e^{-\frac 12 a x_1^2}
\end{equation}
Then 
\begin{equation}
\left(\frac {1}{a^2}\frac {\partial^2}{\partial x_1^2}+\frac{1}{a}\right)\varphi(x_1)=x_1^2\varphi(x_1)
\end{equation}
It follows that 
\begin{equation}
\frac {1}{a}\int_{-\infty}^{\infty}dx_1\varphi(x_1)=\sqrt{2\pi}a^{-\frac 32}
\end{equation}
while also
\begin{equation}
\int_{-\infty}^{\infty}dx_1x_1^2\varphi(x_1)=\sqrt{2\pi}a^{-\frac 32}
\end{equation}
The function $\varphi(x_1)$ is time-independent as it does not
depend on $x_0$. We can consider it as a state in the 
Schr\"odinger picture
\begin{equation}
|q>=|q>_S
\end{equation}
We can consider 
\begin{equation}
\hat{A}=A(x_1)=\frac {1}{a^2}\frac {\partial^2}{\partial x_1^2}+\frac{1}{a}
\end{equation}
as an operator acting on the state $|q>$. In order to
take an inner product of $\hat{A}|q>$ with another state
$|q'>$ corresponing to the field $\varphi'(x_1)$
it is more convenient to define the operator as   
\begin{equation}
\hat{H}=H(x'_1,x_1)=\left(\frac {1}{a^2}\frac {\partial}{\partial x'_1}\frac {\partial}{\partial x_1}+\frac{1}{a}\right)\delta(x_1-x'_1)
\end{equation}
Then 
\begin{equation}
<q'|\hat{H}|q>=\int dx'_1 \int dx_1 \varphi'(x'_1)H(x'_1,x_1)\varphi(x_1)
\end{equation}
Especially
\begin{equation}
\begin{gathered}
<q|\hat{H}|q>=\int dx'_1 \int dx_1 \varphi(x'_1)H(x'_1,x_1)\varphi(x_1)\\
=\int dx_1 x_1^2e^{-2\frac 12 a x_1^2}=\sqrt{2\pi}{(\sqrt{2}a)}^{-\frac 32}
\end{gathered}
\end{equation}
while
\begin{equation}
\begin{gathered}
<q|q>=\int dx'_1 \int dx_1 \varphi(x'_1)\delta(x_1-x'_1)\varphi(x_1)\\
=\int dx_1 \phi(x_1)^*\phi(x_1)
=\int dx_1 e^{-2\frac 12 a x_1^2}=\sqrt{2\pi}{(\sqrt{2}a)}^{-\frac 12}
\end{gathered}
\end{equation}
Thus
\begin{equation}
<q|\hat{H}|q>=E<q|q> \quad E=\frac {1}{\sqrt{2}a}
\end{equation}
Thus, $E$ is the expectation value of the operator $\hat H$ at the 
state $|q>$. Let us assume that the state $|q>$ is expressed as a linear 
combination of the eigenstates of the Hamiltonian operator $\hat H$. If
$E$ can be selected arbitrarily small then we can select a sequence of states 
$|q_n>$ where $E_n$ goes to zero. This means that either the sequence of the
states $|q_n>$ converges to the vacuum state, or that there is no minimal
positive eigenstate for $\hat H$. The state where $|q_n>$ converges
if $a\to\infty$ is zero, which is not a vacuum state. 
We conclude that there is no
minimal positive eighenvalue for $\hat H$, i.e., there is no mass gap. 
We can also write the equation with
the Hamiltonian density $\mathcal{H}$
\begin{equation}
\int dx'_1 \int dx_1 \varphi(x'_1)H(x'_1,x_1)\varphi(x_1)=\int dx_1 \mathcal{H}
\end{equation}

The set of eigenvalues of the Hamiltonian operator forms the energy-mass 
spectrum of the field. The zero function $\varphi(x_1)=0$ always satisfies
the eigenvalue equation but it is not an acceptable eigenstate since it
has zero form. There is assumed to exist an eigenstate with eigenvalue zero, 
the vacuum. The vacuum is not unique in all theories, but it must be unique
in a theory filling Wightman's axioms. 
If there is a minimum positive value $E$ in the 
energy-mass spectrum, we say that there is a mass gap. The eigenstates are
closely related to a parameter called mass because the physical interpretation of the parameter $m$ in an equation
\begin{equation}
\left(\partial_{\mu}\partial^{\mu}+m^2\right)\varphi=0 
\end{equation}
is mass.

Let us now proceed to find the Hamiltonian operator for the Hamiltonian 
density $\mathcal{H}_R$ in Lemma 3.12. We notice that in Lemma 3.12
\begin{gather*}
\int d^3 \mathcal{H_R}=-\int d^3 \mathcal{L_R}
\end{gather*}  
From Lemmas 3.10 and 3.11 we see that the Lagrangian can be expressed in 
variables $y_1,y_2,y_3$ as
\begin{gather*}
\mathcal{L_R}=\frac 12\sum_a(2\beta^2 s_a)^2e^{-2\beta^2\sum_j\rho_j^2}P(\rho)\\
=\frac 12\sum_a(2\beta^2 s_a)^2e^{-2\beta^2\sum_jy_j^2}4(B_1y_1^2+B_2y_2^2+B_3y_3^2+B_4y_1y_2+B_5y_1y_3+B_6y_2y_3)
\end{gather*}
We can ignore the terms $(B_4y_1y_2+B_5y_1y_3+B_6y_2y_3)$ since they disappear
in the integration in Lemma 3.11 and conclude that the Hamiltonian density
in the case of this field takes the form
\begin{equation}
\begin{gathered}
\mathcal{H_R}=Ce^{-\frac 12(2\beta)^2\sum_jy_j^2}(B_1y_1^2+B_2y_2^2+B_3y_3^2)
\quad C=16\sqrt{2}\sum_a s_a^2 \beta^4
\end{gathered} 
\end{equation}
Comparing this expression with (4.4) we can write the Hamiltonian operator
as
\begin{equation}
\hat {H}=H(y',y)=C\sum_{j=1}^3B_j\left(\frac{1}{(2\beta)^4}\frac {\partial}{\partial y'_j}\frac {\partial}{\partial y_j}+\frac {1}{(2\beta)^2}\right)\prod_{j=1}^3 \delta(y_j-y'_j)
\end{equation}
where $y'=(y'_1,y'_2,y'_3)$, $y=(y_1,y_2,y_3)$. 
Let us mention that the Hamiltonian takes this simple form only for the 
field (3.62), not for every field. 
Then
\begin{equation}
<A|\hat{H}|A>=E<A|A>
\end{equation}
takes the form
\begin{equation}
\int d^3y'\int d^3y A(y')H(y',y)A(y)=E\int d^3y'\int d^3y A(y')\delta(y-y')A(y)
\end{equation}
which is the same as
\begin{equation}
\int d^3y \mathcal{H}_R=E\int d^3y A(y)^*A(y)
\end{equation} 
We refer to formulae (2.19), (4.31) and (17.50) in [6] 
for the connection between the Hamiltonian operator and (4.7) and (4.15). 
There are of course many approaches but following the approach in [6] 
the operators (4.7) and (4.15) can be understood to describe the Hamiltonian 
operator for a field theory. 

We see that as $\beta$ in Theorem 3.14 can be freely selected,  
either there is no mass gap or vacuum is not unique, 
provided that the gauge fields $A^a_{\mu}$ in (3.68) and (3.74) are 
acceptable.
We obtained $B=0$ for the energy of the real part in
Minkowski's metric in Lemma 3.12 for the gauge field in (3.68).
While we gave another gauge field with positive energy in (3.74), 
let us notice that the result is negative for the CMI problem also if the 
constant $C=0$. If $C=0$ it implies that the vacuum is not
unique and contradicts Wightman's Axiom II that demands that with the 
exception of vacuum all states have positive energy. 

Let us now continue to the question if $A^a_{\mu}$ in (3.68) and (3.74)
can be excluded 
in a non-trivial quantum field theory for the Yang-Mills Lagrangian (2.1).

Quantization of the Yang-Mills theory can be made by two methods; 
by the path integral method, or by axiomatic quantum field theory.
Canonical quantization is also possible but considered difficult. 
Let us first look at the path 
integral method. Basically quantization of a Yang-Mills field is made by
writing the ground-state-to-ground-state amplitude $W[J]$ as a path integral
\begin{equation}
W[J^{\mu}_a]\sim \int \mathcal{D}A^{\mu} exp\left\{ -i\hbar^{-1} \int d^4x \left(\mathcal{L}_{YM}+J_a^{\mu}A^a_{\mu}\right)\right\}
\end{equation}   
However, there are problems in the path integral and the form (4.19) 
is not to be
followed precisely. The path integral may become infinite for a number of 
reasons and a proper quantization should avoid these pitfalls. The character
of such arguments is either mathematical or physical. For instance, the reason
why the field should disappear when the space coordinates grow is physical.
Mere integrability of a function does not require that it vanishes in 
infinity as positive and negative parts can cancel.  

The discussion in [6] on page 117 mentions the need for fixing the 
gauge in a case where there are infinitely many $A^{\mu}$ related by a gauge
transform, and mentions divergences even in the case that the coupling 
constant $g=0$ in (2.6). 

However, there are more problems in (4.1) when considering $A^a_{\mu}$ in 
(3.68). As can be seen in (3.34), the field $A^a_{\mu}$ is a localized wave
packet, gauge boson, 
that moves with the speed of light ($x_0$) in the $(x_1,x_2)$ plane
to the direction $e_1+e_2$ where $e_j$ is the unit vector of the $j$th 
coordinate. This is very natural behavior for a localized wave packet. It
cannot stay in a limited box, and it cannot be bounded in the time dimension 
$x_0$ because it stays localized and the energy is conserved as $A^a_{\mu}$ is
a solution to the Euler-Lagrange equations. Thus, the groud-state-to-ground
state amplitude
\begin{equation}
W[J]\sim \lim_{{t''\to \infty},{t'\to -\infty}} <q'',t''|q',t'>^J
\end{equation}
is not a proper quantity for this field. We can calculate transitions between
any finite times $t'$ to $t''$, and then the path integral is finite.
\begin{equation}
<q'',t''|q',t'>^J \sim \int \mathcal{D}A^{\mu} exp\left\{ -i\hbar^{-1} \int _{t'}^{t''}dt \int d^3x \left(\mathcal{L}_{YM}+J_a^{\mu}A^a_{\mu}\right)\right\}
\end{equation}   
As essential problem is that as $h:\mathbb{C}\to\mathbb{C}$ in (3.30) must
be holomorphic so that differentiation can be made, its real and imaginary 
parts cannot be bounded. We give a physicality argument  

It is reasonable to require that the field vanishes when the space coordinates
go to $\pm \infty$. However, the time coordinate is different.  
The future cannot effect the past and therefore possible divergences in the 
future are not an appropriate boundary condition for a physical problem 
setting. Likewise, there may well be a finite beginning instance of the time 
and therefore extension of $x_0$ to $-\infty$ is highly speculative.  
Thus, the integration over $x_0$ in $W[J]$ is physically motivated only
between two finite time instances $t'$ and $t''$. Accepting that this argument
for avoiding infinities in the path integral is as reasonable as other tricks
that have been used to the same goal in the semi-mathematical path integral
method, such as cutoffs, renormalization, gauge fixing, etc., the gauge field
$A^a_{\mu}$ in (3.68) is acceptable in a non-trivial quantum Yang-Mills theory
created through the path integral method. 

There are other possible mechanisms to render (4.21) finite. In perturbation 
theory the path integral cannot include solutions to the linear Lagrange 
equations. Thus, (3.68) could be excluded. As there is no other motivation
for exclusion than obtaining a finite integral, one should consider the 
physicality argument above as a more acceptable way to get a finite (4.21). 
In any case, there are various ad hoc methods used in the path integral method
that have the aim of removing infinities from (4.21). 

The gauge fields of the type (3.30) admit a non-trivial quantum field theory
for (2.1). We can divide the path integral to two (and later more) parts, where
the first part only has fields of the type (3.30). They are easy to handle,
sums and real parts also satisfy the Euler-Lagrange equations, as is shown 
in Lemmas 3.3 and 3.4. Sums of these type of fields with different $s_a$
yield equations that involve the structure coefficients $f_{abc}$ and may have
some special solutions. We can briefly look at a sum of solutions of the type
(3.30) with different $s_a$.

\begin{lemma} Let the gauge field 
\begin{equation}
A^{\mu}_a=(s_{a,1}d_{\mu,1}+s_{a,2}d_{\mu,2}) e^{\sum_j h(r_j)} \quad A^3_a=0
\end{equation}
be a solution to (2.22). Then $h$ satisfies an equation of the type
\begin{equation}
\sum_jC_{akj}h''(r_j)+\sum_{j,m}D_{akjm}h'(r_j)h'(r_m)+\sum_jE_{akj}h'(r_j)e^{\sum_j h(r_j)}+F_{ak}e^{\sum 2h(r_j)}
\end{equation}
where $C_{akj}, D_{akjm}, E_{akj}, F_{ak}$ are constants.
\end{lemma}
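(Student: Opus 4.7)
The plan is to substitute the two-component ansatz directly into (2.6) to obtain $F^{\mu\nu}_a$, then into the Yang-Mills equation (2.22), and finally to organize the result according to the exponential factor $e^{k\sum_j h(r_j)}$ that each term carries. The claim then amounts to checking that precisely three levels $k=1,2,3$ appear and that after dividing by the overall $e^{\sum_j h(r_j)}$, the four kinds of terms in (4.23) are the only ones that survive.

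First I would write $A^\mu_a = B^\mu_a e^{\sum_j h(r_j)}$ with constant $B^\mu_a = s_{a,1}d^\mu_1+s_{a,2}d^\mu_2$, and set $\alpha^\mu_j=\partial^\mu r_j$. The abelian part of $F^{\mu\nu}_a$ becomes
\begin{equation*}
\partial^\mu A^\nu_a-\partial^\nu A^\mu_a=\sum_j\bigl(\alpha^\mu_j B^\nu_a-\alpha^\nu_j B^\mu_a\bigr)h'(r_j)\,e^{\sum_j h(r_j)},
\end{equation*}
which is of order $e^{\sum_j h(r_j)}$. The commutator piece $-gf_{abc}A^\mu_bA^\nu_c$, using Lemma 2.1 and the two-component structure, reduces to $K^{\mu\nu}_a e^{2\sum_j h(r_j)}$ with the \emph{constant} coefficient
\begin{equation*}
K^{\mu\nu}_a=-g\sum_{c>b}f_{abc}(s_{b,1}s_{c,2}-s_{c,1}s_{b,2})(d^\mu_1 d^\nu_2-d^\mu_2 d^\nu_1),
\end{equation*}
so that $F^{\mu\nu}_a$ splits cleanly into a linear part ($e^{\sum h}$) and a commutator part ($e^{2\sum h}$). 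This is precisely the feature that differs from Lemma 2.2, where a single $s_a$-component forced $K^{\mu\nu}_a=0$.

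Next I would plug this $F$ into (2.22). Differentiating $F^a_{\mu\nu}$ by $\partial^\mu$ produces two families: from the linear piece we obtain $h''(r_j)$ and $h'(r_j)h'(r_m)$ contributions, both carrying $e^{\sum_j h(r_j)}$; from the commutator piece, $\partial^\mu$ hits only the exponential and yields a term of the form $h'(r_j)\,e^{2\sum h}$. The remaining contraction $gf_{abc}A^\mu_b F^c_{\mu\nu}$ contributes in two ways: $A\cdot(\text{linear }F)$ gives $h'(r_j)\,e^{2\sum h}$ and $A\cdot(\text{commutator }F)$ gives a constant multiple of $e^{3\sum h}$. Collecting by exponential level and then dividing the whole equation by $e^{\sum_j h(r_j)}$ gives
\begin{equation*}
\sum_j C_{akj}h''(r_j)+\sum_{j,m}D_{akjm}h'(r_j)h'(r_m)+\sum_j E_{akj}h'(r_j)e^{\sum_j h(r_j)}+F_{ak}e^{2\sum_j h(r_j)}=0,
\end{equation*}
which is the form claimed in (4.23). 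The constants $C_{akj},D_{akjm}$ will involve only $\alpha^\mu_j$, $B^\mu_a$ (through inner products like $\alpha^\mu_j\alpha_{\mu m}$ and $\alpha^\mu_j B_{\mu a}$), whereas $E_{akj}$ and $F_{ak}$ inherit the structure-constant data through $K^{\mu\nu}_a$ and $f_{abc}B^\mu_b K^c_{\mu\nu}$.

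The main obstacle is not any single computation but the bookkeeping: keeping track of which $B^\mu_a$, $K^{\mu\nu}_a$, or Jacobi-type combination $f_{abc}f_{cde}$ appears in each coefficient, and verifying that no further exponential level ($e^{0}$ or $e^{4\sum h}$) sneaks in. I expect the only genuinely nontrivial check is that the cubic-in-$A$ contribution, $gf_{abc}B^\mu_b K^c_{\mu\nu}$, really is a \emph{constant}---it is, because all $r_j$-dependence has been collected into the $e^{3\sum h}$ factor---and that nothing in the $e^{2\sum h}$ bucket accidentally vanishes to erase the $E_{akj}$ term.
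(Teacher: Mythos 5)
Your proposal is correct and follows essentially the same route as the paper's own proof: split $F^{\mu\nu}_a$ into a linear piece carrying $e^{\sum_j h(r_j)}$ and a commutator piece carrying $e^{2\sum_j h(r_j)}$ with constant coefficient, substitute into (2.22), and sort the resulting terms by exponential level. If anything you are slightly more careful than the paper, which cites the reduced equations (3.9) (valid only when the commutator terms vanish) where it should invoke Lemma 3.1, and which never explicitly divides by $e^{\sum_j h(r_j)}$ to reconcile its $e^{3\sum_j h(r_j)}$ term with the stated $F_{ak}e^{\sum_j 2h(r_j)}$ term.
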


\begin{proof}
Calculating $F_{\mu\nu}^a$ yields
\begin{gather*}
F_{\mu\nu}^a=\sum_j\alpha_{\mu j}h'(r_j)(s_{a,1}d_{\nu,1}+s_{a,2}d_{\nu,2})e^{\sum_j h(r_j)}\\
-\sum_j\alpha_{\nu j}h'(r_j)(s_{a,1}d_{\mu,1}+s_{a,2}d_{\mu,2})e^{\sum_j h(r_j)}\\
-g\sum_{c>b}f_{abc}(s_{b,1}s_{c,2}-s_{b,2}s_{c,1})(d_{\mu,1}d_{\nu,2}-(d_{\mu,2}d_{\nu,1})e^{\sum_j 2h(r_j)}
\end{gather*}
The last term does not disappear, thus $F_{l,k}^a$ is of the form
\begin{gather*}
F_{lk}^a=\sum_ja_{lkj}h'(r_j)e^{\sum h(r_j)}+b_{lk}e^{\sum_j 2h(r_j)}
\end{gather*}
As $A^3_a=0$ the equations (2.22) reduce to (3.9). The terms in (3.9) are of 
the following form, the constants are complex numbers
\begin{gather*}
\partial^l F_{lk}^a=\sum_{j,l}a_{lkj}\alpha_{lj}h''(r_j)e^{\sum_j h(r_j)}
+\sum_{j,l,m}a_{lkj}\alpha_{lm}h'(r_j)h'(r_m)e^{\sum_j h(r_j)}\\
+\sum_{j,l}b_{lk}2h(r_j)\alpha_{lj}e^{\sum_j 2h(r_j)}
\end{gather*}
\begin{gather*}
\partial^3\partial^3 A_k^a=\sum_j c_{akj}h''(r_j)e^{\sum_j h(r_j)}
+\sum_j d_{akjm}h'(r_j)h'(r_m)e^{\sum_j h(r_j)}
\end{gather*}
\begin{gather*}
-gf_{abc}A^l_aF^a_{lk}=\sum_je_{lkj}h'(r_j)e^{\sum_j 2h(r_j)}+f_{lk}e^{\sum_j 3h(r_j)}
\end{gather*}
\end{proof}

\begin{lemma}
Let $h(r_j)=\beta^2r_j^2$ in  Lemma 4.1.
There are no solutions with $F_{ak}\not=0$ of the type in Lemma 4.1.
\end{lemma}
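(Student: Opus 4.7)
The plan is to specialize Lemma 4.1 to $h(r_j)=\beta^2 r_j^2$, for which $h'(r_j)=2\beta^2 r_j$ and $h''(r_j)=2\beta^2$. Under this substitution the equation of Lemma 4.1 collapses to
\[
P_1(r)+P_2(r)\,\exp\!\Bigl(\beta^2\sum_{j}r_j^2\Bigr)+F_{ak}\,\exp\!\Bigl(2\beta^2\sum_{j}r_j^2\Bigr)=0,
\]
where $P_1(r)$ is a polynomial of total degree at most two in $r=(r_1,r_2,r_3)$ (gathering the $h''$ and $h'h'$ contributions), $P_2(r)$ is linear in $r$ (from the $h'$ term), and $F_{ak}$ sits as the coefficient of the top exponential tier.

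The next step is a dominant-balance argument. The linear forms $r_1,r_2,r_3$ in Lemma 4.1 (for the construction in (3.33)) are linearly independent as functions of the $x_\mu$, so $(r_1,r_2,r_3)$ may be treated as a free variable; restricting to $x_0=0$ makes all $r_j$ real. Letting $\|r\|\to\infty$ with $\sum_j r_j^2$ real and large and positive, the three summands above have strictly ordered magnitudes, namely polynomial, polynomial times $e^{\beta^2\|r\|^2}$, and constant times $e^{2\beta^2\|r\|^2}$. The identity can therefore hold only if each coefficient vanishes separately; in particular $F_{ak}=0$, which contradicts the hypothesis $F_{ak}\neq 0$ and proves the lemma.

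The main obstacle I expect is bookkeeping rather than any genuine difficulty. To be sure the $e^{2\sum_j h(r_j)}$ tier really appears only with the constant coefficient $F_{ak}$, one has to inspect the derivation leading to Lemma 4.1 and confirm that the potential $e^{3\sum_j h(r_j)}$ contribution (produced by pairing the quadratic-in-$A$ piece of $-gf_{abc}A^l_b F^c_{lk}$ with the non-derivative part of $F^c_{lk}$) either vanishes identically for the specific $d_{\mu,k}$ and $s_{a,k}$ being used, or else is stripped off first by the same asymptotic argument with its constant coefficient forced to zero. Once that bookkeeping is verified, the conclusion $F_{ak}=0$ follows in one line from the linear independence of Gaussians of distinct widths.
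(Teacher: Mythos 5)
Your proposal is correct and is essentially the paper's own argument: the paper's proof consists of the single observation that the $e^{2\sum_j h(r_j)}$ term in (4.23) ``is not cancelled by anything,'' which is exactly your separation-into-exponential-tiers step forcing $F_{ak}=0$. You simply make explicit the justification (dominant balance as $\sum_j r_j^2\to\infty$ and linear independence of Gaussians of distinct widths) that the paper leaves implicit.
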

\begin{proof}
The term $e^{-2\beta^2\sum r_j^2}$ in (4.23) is not cancelled by anything.
\end{proof}

Lemma 4.2 shows that there are no interactions for solutions of the type (3.39)
but there could be solutions of as in Lemma 4.1 for some other $h(r_j)$.
In [4] a type of function is proposed as a solution for (4.23) but it is
not explicitly shown that such a solution exists. Even simple solutions of
the type (3.30) are not trivial and may give solutions that do not appear
in the free field case, i.e., when the coupling constant $g$ is zero. 

In any case, the largest group of solutions is surely
(3.30) since there we have a free function $h$, while if the structure 
constants appear in the equations, we get a nonlinear partial differential 
equation, at least as difficult or worse as in Lemma 4.1, 
which typically have fewer solutions. If more solution families are
found, correction terms can be calculated from the remaining parts of the 
path integral. Thus, we can make a non-trivial theory for a pure Yang-Mills 
Lagrangian and compute first order approximations. Let us mention that Quantum Electrodynamics (QED) and Quantum Chromodynamics (QCD) are not non-trivial
quantum field theories for the pure Yang-Mills Langangian but there the spinor
fields interacting with gauge fields create the interesting results.
As a conclusion, there is no good reason to exlude $A^a_{\mu}$ in (3.68)
or (3.74) in the path integral approach.

The other approach is axiomatic quantum field theory where Wightman's axioms, 
or something as strong, has especially been mentioned in the CMI problem.
We do not need to construct a theory filling axioms similar or stronger that
Wightman's but only to investigate if a theory filling such conditions should
include $A^a_{\mu}$ in (3.68), properly normalized, as a state. 
This involves showing two things. Firstly, that they can be included, and 
secondly that a theory that does not include them should be called trivial. 
Let us briefly go through Wightman's axioms. 
Wightman does not consider gauge 
fields at all and so we have to modify the axioms. 

Axiom I. {\it The states of a quantum field theory are normalised vectors in a separable Hilbers space, $\mathcal {H}$, two such that they differ by a complex phase giving raise to the same state.}  
If we normalize $A^a_{\mu}$ it is a normalized vector in a 
separable Hilbert space. Any states that differ by a complex phase give
rise to the same state. Thus, $A^a_{\mu}$ and $A^{a,R}_{\mu}$ give the same
state as these differ by a complex phase. Apparently we can compute the real 
Lagrangian $\mathcal{L}$ as this is what Axiom I seems to imply. Fortunately, 
$A^{a,R}_{\mu}$ is a solution to the real Euler-Lagrange equations.

Axiom II. {\it The space $\mathcal{H}$ 
carries a continuous unitary representation 
$(a,\Lambda)\mapsto U(a,\Lambda)$ of the restricted orthochronous Poincare group. In $\mathcal{H}$ there exists a vector, unique up to a phase, (called the vacuum state) that is invariant under all $U(a,\Lambda)$ and for all other vectors $\Psi\in \mathcal{H}$ the energy is positive.}
The only issue of concern here is that the energy of $A^a_{\mu}$, and
of $A^{a,R}_{\mu}$, is positive, which is shown in Theorem 3.14 for the metric
of (2.33). For Minkowski's metric it was shown the energy of the real 
Hamiltonian is zero. If also the imaginary part is zero this means that the 
vacuum is not unique. We may want to discard (3.68) in Minkowski's metric but
(3.74) gives positive energy and there is no reason to discard that field.   

Axiom IIIa. Deals only with the vacuum state and is of no concern to 
$A^a_{\mu}$ being an acceptable state or not. 

Axiom IIIb. {\it For any pair of vectors $\Phi$ and $\Psi$, the map $f\mapsto <\Phi,\phi(f)\Psi>$ is continuous.}
Here $\phi(f)=\int d^4x\phi(x)f(x)$ is the smeared field. The function
$f$ is tempered, i.e., belongs to $S$, the set of infinitely differentiable 
functions on $\mathbb{R}^4$ which vanish faster than any power of 
Euclidean distance. It guarantees that the integral converges. 
The inner product is given by an integral over $\mathbb{R}^4$. 
If one of the vectors $\Phi$ and $\Psi$ is $A_{\mu}^a$ and another one is not,
then fulfilment of the axiom depends on the other vector. 
If both vectors are of the type
$A_{\mu}^a$ then the map $f\mapsto <\Phi,\Psi(f)>$ is continuous. 

Axiom IV. {\it Suppose that $f,g\in S$ are such that ${\rm supp} f$ is 
space-like 
to ${\rm supp} g$; then $\phi(f)\phi(g)=\phi(g)\phi(f)$.}
This holds for the free field. $\phi=A_{\mu}^a$ is a solution to free field 
equations as the part with the structure constants cancels. 

These axioms do not have requirements that exclude $A_{\mu}^a$. Thus, 
$A_{\mu}^a$ can be included in a theory filling the axioms at if the
energy is positive. If the energy is zero, there is a problem in the theory.
In that case we may exclude $A_{\mu}^a$ in order to resolve the problem 
and fill Wightman's axioms, but it is a bit artificial way. 
The second part is to show that they must be included in a non-trivial theory.
The solutions $A_{\mu}^a$ are natural solutions to the Euler-Langange equations
and especially if the coupling constant $g=0$ or the group is $U(1)$ they are
among the possible solutions. They give arbitrarily small eigenvalues to the
Hamiltonian. While it may be possible to create a theory which does not include
these solutions and is still valid for $U(1)$ and $g=0$ cases, such a theory is
trivial since it can be made by the following trivial procedure. Take any 
theory filling the axioms. If it includes the states $A_{\mu}^a$, then exlude
all states that have these states as minimal solutions for the Lagrangian.
The resulting theory does not have these eigenstates for the Hamiltonian.
Indeed, we can make a theory with two states only, vacuum and 
an eigenstate of the Hamiltonian with a non-zero eigenvalue. 
Then all axioms are easily filled. 
A trick of this type can always be made and it avoids the essential problem of
showing that there is a mass gap and has no physical relevance. 
Thus, we should call trivial any quantum 
field theory that does not include the solutions of the type $A_{\mu}^a$ if 
(more accurately, as) they can be included. 

There are two manuscripts [4],[5] arguing that a mass gap exists.
Both start by imposing the temporal, or Weyl, gauge $A^0=0$.
If we impose this gauge and then look at the boundary conditions, the solutions
(3.68) cannot be found. This is because when we localize the gauge field we need three linearly independent vectors $r_j$ in (3.33). As can be seen in the 
selected space gauge $A^3=0$, we only get two vectors for the non-gauged 
coordinates as is shown in Lemma 3.6. The third vector must be obtained from 
the gauged coordinate. Had we gauged time, then the equations in (3.34) would
show that $x_0$ is limited, as now is $x_3$, while $x_1$ and $x_2$ would
be linearly dependent on $x_3$. Then the field would not be integrable over
the space coordinates, while it would be limited in time. Instead of fixing
the gauge first, we must first look at the boundary conditions. This shows 
that the temporal gauge is not the correct choice, while a space gauge can 
work.

\section{Final comments of the CMI Millennium Prize problem}

The CMI problem setting called for mathematical clarity to the area of 
gauge fields. Much of this lack of clarity has traditionally been caused by
mathematical unclarities in the path integral method. 
Everything is formulated in simple lemmas which are given proofs.
This does not imply that the lemmas are considered new, it is only for clarity.
The presentation of Yang-Mills fields follows the approach in [6].

There are some final words about the clarity of the CMI problem 
statement itself. 

The problem statement does not specify whether the gauge group should be local or global, and [5] understands that it is global gauge group. It probably must
be local gauge group since the relevant issues arise from local gauge invariance. However, this should have been stated.

The metric in the CMI problem setting is unclear.
Minkowski's metric (2.9) is the correct choice for quantum field theory
but the CMI problem setting only mentions $\mathbb{R}^4$ and
the expert's explanation in [2], referred to in (2.38), seems to point 
to the Euclidean metric and to real curvature. We can present the
results in $\mathbb{R}^4$ with the Euclidean metric also. 
The convenient way to do it is to use the negative definite metric (2.33).
In Section 3 we have used (2.11). It is still valid for the metric (2.33), as
(2.5) is the definition and in (2.11) we have simply multiplied (2.6) by 
$g_{\mu\beta}g_{\nu\beta}$. 
We have also used (2.22). In the derivation of (2.22) we have kept the metric
explicitely and not used the values of $g_{\mu\nu}$ from (2.9). Thus, (2.22)
is also valid for (2.33). There are no raising or lowering $x_0$ indices in
Lemmas 3.1-3.9, thus they stay valid. In Lemma 3.10 we use (2.27) but give the
result also for the metric in (2.33). Lemma 3.11 has no changes. 
There is derivation with respect to $x_0$ in Lemma 3.12 but the conclusions
remain since they are caused by the disappearance of the integral (3.61) as
is mentioned in to proof.
It follows that Theorem 3.14 holds also for the metric (2.33) for some other
constant $C$. 
It is assumed that the fields can be complex as it is the situation in the 
physical problem and the Hodge star operation is defined for differential 
forms in complex manifolds. But as it is unclear in (2.38) and in the problem
setting the calculations were done for the real part of the curvature covering
the possibility that the problem statement implies real fields. 
It would have been much clearer if the CMI problem statement
had stated if Minkowski's metric is assumed, and if the fields
are complex or real. 

Referring to axiomatic field theory by mentioning axioms that do not as such
apply to gauge fields, use of words such as non-trivial, etc. would make any 
positive solutions to the CMI problem difficult to argue. This would not
be an issue if proposed solutions to the CMI problems would be positively
received and carefully reviewed. It would be an issue if the opposite were 
the case. 

The results of this article are easier to verify:

It seems that the CMI problem refers to Euclidean metric and real curvature.
As there is no minus sign in (2.34) and (2.38) while there is one in (2.1) it
seems that the metric is as in (2.33). In this case there is no mass gap since
we can by selection of $\beta$ in (3.73) make the eigenvalue of the Hamiltonian
as small as desired.

If the problem means Minkowski's metric and real fields, then the gauge field 
in (3.74) shows that there is no mass gap. However, the field (3.68) gives
zero energy and indicates that vacuum is not unique and Wightman's axioms 
cannot be filled. We may want to exclude the field (3.68) in this case but
there is no good reason for excluding it.

If the problem means complex fields in either metric, the conclusions 
are the same.

The results presented here should not be called a trivial free field theory. 
The coupling constant $g$ is not set to zero. The solutions that have been 
found are of such a type that the part with structure constants cancel. 
As Lemmas 4.1 and 4.2 indicate, nontrivial results can be found starting 
from the solutions in (3.30) and (3.39). Localization of the field in space
is not trivial and in general this word should be avoided if clarity is 
desired because clarity is best achieved by writing down all steps. This 
article may be correctly called elementary and easy, but not trivial.

\end{document}